\numberwithin{equation}{section}
\newcommand{\set}[1]{\left\{ #1 \right\}}
\newcommand{\ra}{\rightarrow}
\newcommand{\isom}{\xrightarrow{\sim}}
\newcommand{\map}[5]{#1: \xymatrix@R=0in{#2 \ar[r] & #3 \\ #4 \ar@{|->}[r] & #5}}
\newcommand{\chr}{\mathrm{char}}
\newcommand{\alg}[1]{\overline{#1}}
\newcommand{\sep}[1]{{#1}^{\rm{s}}}
\newcommand{\Gal}{\mathrm{Gal}}
\newcommand{\Frob}{\mathrm{Frob}}
\newcommand{\Ind}{\mathrm{Ind}}
\newcommand{\tr}{\mathrm{tr}\,}
\newcommand{\Z}{\mathbb{Z}}
\newcommand{\Q}{\mathbb{Q}}
\newcommand{\C}{\mathbb{C}}
\newcommand{\GL}{\mathrm{GL}}
\newcommand{\gl}{\mathfrak{gl}}
\theoremstyle{plain}
\newtheorem{theorem}{Theorem}
\newtheorem{proposition}[theorem]{Proposition}
\newtheorem{corollary}[theorem]{Corollary}
\newtheorem{lemma}[theorem]{Lemma}
\newtheorem{conjecture}[theorem]{Conjecture}
\theoremstyle{definition}
\newtheorem{definition}[theorem]{Definition}
\newtheorem{remark}[theorem]{Remark}
\numberwithin{equation}{section}
\numberwithin{theorem}{section}
\begin{document}

\title{Independence of Algebraic Monodromy Groups in Compatible Systems}
\author{Federico Amadio Guidi}
\date{}
\address{Mathematical Institute \\ University of Oxford \\ Oxford, UK}
\email{federico.amadio@maths.ox.ac.uk}
\subjclass[2010]{11F80, 11F70, 11S37, 14G17.}

\begin{abstract}
In this paper we develop a general method to prove independence of algebraic monodromy groups in compatible systems of representations, and we apply it to deduce independence results for compatible systems both in automorphic and in positive characteristic settings. In the abstract case, we prove an independence result for compatible systems of Lie-irreducible representations, from which we deduce an independence result for compatible systems admitting what we call a Lie-irreducible decomposition. In the case of geometric compatible systems of Galois representations arising from certain classes of automorphic forms, we prove the existence of a Lie-irreducible decomposition, assuming a classical irreducibility conjecture. From this we deduce an independence result. We conclude with the case of compatible systems of representations of the absolute Galois group of a global function field, for which we prove the existence of a Lie-irreducible decomposition, and we deduce an independence result. From this we also deduce an independence result for compatible systems of lisse sheaves on normal varieties over finite fields.
\end{abstract}

\maketitle

\section*{Introduction}

Questions on $\ell$-independence of algebraic monodromy groups in compatible systems originate in the literature from a classical result by Serre, see \cite{serre68}, which states that if $E$ is an elliptic curve without complex multiplication over a number field, then the image of the Galois representation on the $\ell$-adic Tate module of $E$ is an open subgroup of $\GL_2 (\Z_\ell)$ for every prime $\ell$, and is equal to $\GL_2 (\Z_\ell)$ for all but finitely many primes $\ell$. The problem of extending this result to more general geometric settings led, for instance, to the formulation of the Mumford-Tate conjecture for abelian varieties, see \cite{mum66}. Nevertheless, one can naturally study $\ell$-independence questions for any abstract rational compatible system of representations. In this context, the foundational work of Larsen and Pink, see \cite{LP92}, provides some breakthroughs.

It is immediate to notice how the main results of \cite{LP92} can be extended verbatim to compatible systems having coefficients in any number field. In our paper, we use these results to develop a general method to prove $\lambda$-independence of the neutral components of the algebraic monodromy groups of compatible systems over a finite extension of the field of coefficients, and at a set of places of residual Dirichlet density $1$. The main idea is that one can prove a $\lambda$-independence result of this form whenever a compatible system admits a \emph{Lie-irreducible decomposition} over a finite extension of its field of coefficients. Let us briefly sketch our approach. First of all, we apply the results of \cite{LP92} to prove $\lambda$-independence of the neutral components of the algebraic monodromy groups of compatible systems of Lie-irreducible representations over a finite extension of the field of coefficients, and at a set of places of residual Dirichlet density $1$, see Proposition \ref{independence_theorem}. We then introduce the notion of Lie-irreducible decomposition, see Definition \ref{Lie-irreducible_decomposition_definition}, and we deduce from the Lie-irreducible case an analogous $\lambda$-independence result for compatible systems admitting a Lie-irreducible decomposition, see Corollary \ref{independence_Lie-irreducible_decomposition}.

In the case of automorphic compatible systems (and actually for a slightly larger class of geometric compatible systems), assuming a classical conjecture on the irreducibility of automorphic Galois representations for a set of primes of Dirichlet density $1$, we prove the existence of a Lie-irreducible decomposition, see Theorem \ref{decomposition_theorem_automorphic}, from which we deduce a $\lambda$-independence result, see Corollary \ref{independence_automorphic}.

In the positive characteristic setting, we prove the existence of a Lie-irreducible decomposition for compatible systems of representations of the absolute Galois group of a global function field, see Theorem \ref{decomposition_theorem_curves}, from which we deduce a $\lambda$-independence result, see Corollary \ref{independence_positive_characteristic}. We also deduce a $\lambda$-independence result for compatible systems of lisse sheaves on normal varieties over finite fields, see Corollary \ref{independence_lisse_sheaves}.

The structure of this paper is the following. In \S \ref{algebraic_monodromy_groups} we introduce the basic terminology, while in \S \ref{formal_character_variety_characteristic_polynomials} and \S \ref{frobenius_tori} we recall the main tools from \cite{LP92} which will be useful in \S \ref{independence_absolutely_irreducible}. The general method of this paper and the main results in the general setting are presented in \S \ref{independence_absolutely_irreducible}. Finally, we see applications of the results of \S \ref{independence_absolutely_irreducible} to the case of geometric compatible systems of Galois representations in \S \ref{independence_geometric_compatible_systems}, and to compatible systems in the positive characteristic case in \S \ref{independence_compatible_systems_lisse_sheaves}.

\subsection*{Acknowledgements}

The author would like to thank his PhD supervisor Andrew Wiles for introducing him to the study of these problems, and for his constant guidance and encouragement. The author would also like to thank Wojciech Gajda for bringing \cite{BGP19} to his attention, and Laura Capuano, Toby Gee, Minhyong Kim, Giacomo Micheli, and Damian R\"ossler for interesting conversations and useful suggestions.

\subsection*{Notation}

For a prime $\ell$, we denote by $\zeta_\ell$ a primitive $\ell^{\mathrm{th}}$-root of $1$.

Given a profinite group $\Gamma$, an integer $n \geq 1$, and a continuous representation $\rho : \Gamma \ra \GL_n (\alg{\Q}_\ell)$, we let $\overline{\rho} : \Gamma \ra \GL_n (\alg{\mathbb{F}}_\ell)$ denote the semisimplification of the reduction of $\rho$ modulo $\ell$, which is defined up to conjugacy.

Given a field $k$, we let $\alg{k}$ be an algebraic closure of $k$, and $\sep{k}$ be a separable closure of $k$ inside $\alg{k}$, and we denote by $\Gamma_k = \Gal (\sep{k} / k)$ the absolute Galois group of $k$.

Given a global field $F$, we denote by $|F|$ the set of finite places of $F$. If $\Sigma$ is a set of places of $F$, and $p$ is a prime, we set $\Sigma_{\neq p} = \set{v \in \Sigma \, : \, v \nmid p}$. We denote by $\mathbb{A}_F$ the ring of adeles of $F$.

If $K$ is a non-Archimedean local field, we let $I_K$ be the inertia subgroup of $\Gamma_K$, and $\Frob_K \in \Gamma_K / I_K$ be the geometric Frobenius. When $K = F_v$, for $F$ a global field, and $v$ a finite place of $F$, we write $\Frob_v = \Frob_{F_v}$.

If $K$ is a finite extension of $\Q_p$, given a Weil-Deligne representation $(r, N)$ of the Weil group $W_K$ of $K$, we let $(r, N)^{\mathrm{F-ss}}$ denote its Frobenius-semisimplification. Given a continuous representation $\rho : \Gamma_K \ra \GL_n (\alg{\Q}_\ell)$, with $\ell \neq p$, we denote by $\mathrm{WD} (\rho)$ the corresponding Weil-Deligne representation of $W_K$. We denote by $\mathrm{rec}_K$ the local Langlands correspondence for $\GL_n$ over $K$ of \cite{HT01}. When $K = F_v$, for $F$ a number field, and $v$ a finite place of $F$, we write $\mathrm{rec}_v = \mathrm{rec}_{F_v}$.

If $K$ is a finite extension of $\Q_\ell$, if $V$ is a finite dimensional vector space over $\alg{\Q}_\ell$, and $\rho : \Gamma_K \ra \GL (V)$ is a continuous semisimple de Rham representation, we denote by $\mathrm{WD} (\rho)$ the corresponding Weil-Deligne representation of $W_K$. Also, given an embedding $\tau : K \ra \alg{\Q}_\ell$ we define the multiset $\mathrm{HT}_\tau (\rho)$ of \emph{$\tau$-Hodge-Tate weights} of $\rho$ to be the multiset of $\dim_{\alg{\Q}_\ell} V$ integers containing $i$ with multiplicity $\dim_{\alg{\Q}_\ell} (V \otimes_{\tau, K} \widehat{\alg{K}} (i))^{\Gamma_F}$, where $\widehat{\alg{K}}$ denotes the completion of an algebraic closure of $K$. If $F$ is a number field, if $V$ is a finite dimensional vector space over $\alg{\Q}_\ell$, and $\rho : \Gamma_F \ra \GL (V)$ is a continuous semisimple representation which is de Rham at all places above $\ell$, given an embedding $\tau : F \ra \alg{\Q}_\ell$ we define the multiset of \emph{$\tau$-Hodge-Tate weights} of $\rho$ to be $\mathrm{HT}_\tau (\rho) = \mathrm{HT}_\tau (\rho \! \mid_{\Gamma_{F_{v(\tau)}}})$, where $v(\tau)$ is the place of $F$ induced by $\tau$.

Given a scheme $X$ of finite type over $\mathbb{F}_q$, where $q$ is a power of $p$, we denote by $|X|$ its set of closed points. For each $x \in |X|$, let $\overline{x}$ be an algebraic geometric point of $X$ localised at $x$, and let $\Frob_x \in \Gamma_{k(x)} = \Gal (k(\overline{x}) / k(x))$ be the geometric Frobenius. Given a sheaf $\mathcal{F}$ on $X$, we write $\det (1 - \Frob_x t, \mathcal{F})$ for the characteristic polynomial of the image of $\Frob_x$ under the monodromy representation of $\Gamma_{k(x)}$ on $\mathcal{F}_{k(\overline{x})}$.

\section{Compatible systems and algebraic monodromy groups} \label{algebraic_monodromy_groups}

A \emph{group with Frobenii}, or simply an \emph{$F$-group}, is a pair $(\Gamma, \set{F_\alpha}_{\alpha \in A})$ where $\Gamma$ is a profinite group, and $\set{F_\alpha}_{\alpha \in A}$ is a collection of elements of $\Gamma$, called \emph{Frobenius elements}, which are dense in $\Gamma$. Motivating examples for the definition of group with Frobenii are clearly the absolute Galois group of a global field, together with the collection of geometric Frobenii at its finite places, and the \'etale fundamental group of a scheme of finite type over a finite field, together with the collection of geometric Frobenii at its closed points.

Let $(\Gamma, \set{F_\alpha}_{\alpha \in A})$ be a group with Frobenii, let $E$ be a number field, and let $\Lambda$ be a set of finite places of $E$ of residual Dirichlet density\footnote{We say that a set $\Sigma$ of finite places of a number field $F$ has \emph{residual Dirichlet density} $\sigma$ if the set of rational primes $\set{\chr (k(v)) \, : \, v \in \Sigma}$, where $k(v)$ denotes the residue field of $F$ at $v$, has Dirichlet density $\sigma$ in $\Z$.} $1$. We give the following definition\footnote{Compare \cite[Definition 6.5]{LP92}.}.

\begin{definition} \label{compatible_systems_definition}
A \emph{compatible system} of rank $n$ representations of $\Gamma$ defined over $E$ is a family $\mathcal{R} = \set{\rho_{\lambda}}_{\lambda \in \Lambda}$ of continuous representations
\[ \rho_{\lambda} : \Gamma \ra \GL_n (\alg{E}_{\lambda}) \]
such that there is a subset $\mathcal{X} \subset A \times \Lambda$~satisfying the following conditions.
\begin{enumerate}
\item[$(1)$] For every $\alpha \in A$, $(\alpha, \lambda) \in \mathcal{X}$ for all but finitely many $\lambda \in \Lambda$.
\item[$(2)$] For all $(\alpha, \lambda) \in \mathcal{X}$, the characteristic polynomial of $\rho_{\lambda} (F_\alpha)$ has coefficients in~$E$~and is independent of $\lambda$.
\item[$(3)$] For any places $\lambda_1, \ldots, \lambda_m \in \Lambda$, the set $\set{F_\alpha \, : \, (\alpha, \lambda_i) \in \mathcal{X} \text{ for all } i = 1, \ldots, m}$ is dense in~$\Gamma$.
\end{enumerate}
\end{definition}

In the last two sections we will see two main (motivating) classes of examples of compatible systems, i.e. geometric compatible systems of Galois representations, see \S \ref{independence_geometric_compatible_systems}, and compatible systems of lisse sheaves on schemes of finite type over finite fields, see \S \ref{independence_compatible_systems_lisse_sheaves}.

We say that a compatible system $\mathcal{R} = \set{\rho_{\lambda}}_{\lambda \in \Lambda}$ of rank $n$ representations of $\Gamma$ defined over $E$ has \emph{coefficients} in a finite extension $E'$ of $E$ if for any $\lambda \in \Lambda$ the image of $\rho_{\lambda}$ is contained in $\GL_n (E'_{\lambda'})$ for some place $\lambda'$ of $E'$ above $\lambda$.

Two compatible systems $\mathcal{R} = \set{\rho_{\lambda}}_{\lambda \in \Lambda}$ and $\mathcal{R}' = \set{\rho_{\lambda}'}_{\lambda \in \Lambda'}$ defined over a number field $E$ are said to be \emph{isomorphic} if there exists a subset $\Lambda'' \subset \Lambda \cap \Lambda'$ of residual Dirichlet density $1$ such that $\rho_\lambda \cong \rho_\lambda'$ for all $\lambda \in \Lambda''$. In this case, we write $\mathcal{R} \cong \mathcal{R}'$. When $\mathcal{R}$ and $\mathcal{R}'$ have coefficients in a number field $E'$, we say that they are \emph{isomorphic over $E$} if the isomorphism $\rho_\lambda \cong \rho_\lambda'$ is over $E$ for all $\lambda \in \Lambda''$.

Let $(\Gamma, \set{F_\alpha}_{\alpha \in A})$ be a group with Frobenii, and let $\mathcal{R} = \set{\rho_{\lambda}}_{\lambda \in \Lambda}$ be a compatible system of rank $n$ representations of $\Gamma$ with coefficients in a number field $E$. For any $\lambda \in \Lambda$, denote by~$G_\lambda$~the Zariski closure of $\rho_\lambda (\Gamma)$ in $\GL_n (E_\lambda)$. This is an algebraic subgroup of $\GL_{n, E_\lambda}$, which we call the \emph{algebraic monodromy group} of $\rho_\lambda$. Denote by $G_\lambda^\circ$ the connected component of the identity of $G_\lambda$. We recall the following lemma.

\begin{lemma} [{\cite[Lemma 6.9]{LP92}}]
If $\rho_\lambda$ is semisimple, then $G_\lambda$ is reductive. If $\rho_\lambda$ is absolutely irreducible, then $G_\lambda$ is reductive, and its natural representation $G_\lambda \ra \GL_{n, E_{\lambda}}$~is absolutely irreducible.
\end{lemma}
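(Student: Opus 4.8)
The plan is to reduce to the well-known representation-theoretic fact that a linear algebraic group whose natural (faithful) representation is semisimple is reductive, and then to upgrade this in the absolutely irreducible case. First I would recall that the representation $\rho_\lambda$ factors through $G_\lambda$ by definition, so the natural inclusion $G_\lambda \hookrightarrow \GL_{n, E_\lambda}$ is a faithful representation of $G_\lambda$; call it $V$. Since $\rho_\lambda(\Gamma)$ is Zariski-dense in $G_\lambda$, the $G_\lambda$-submodules of $V$ are exactly the $\rho_\lambda(\Gamma)$-stable (equivalently $\Gamma$-stable) subspaces, so $V$ is a semisimple $G_\lambda$-module precisely when $\rho_\lambda$ is semisimple as a $\Gamma$-representation, and $V$ is an irreducible $G_\lambda$-module (after base change to $\alg{E_\lambda}$) precisely when $\rho_\lambda$ is absolutely irreducible.

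For the first assertion, I would argue that the unipotent radical $R_u(G_\lambda^\circ)$ acts trivially on $V$: a connected unipotent normal subgroup acting on a semisimple module must act trivially, since its fixed subspace in each simple constituent is nonzero (Kolchin/Lie–Kolchin) and is a $G_\lambda$-submodule, hence all of it by simplicity. Because $V$ is faithful, this forces $R_u(G_\lambda^\circ) = 1$, i.e. $G_\lambda$ is reductive. (One must be slightly careful about the field $E_\lambda$ not being algebraically closed, but reductivity can be checked after base change to $\alg{E_\lambda}$, and semisimplicity of $\rho_\lambda$ over $E_\lambda$ implies semisimplicity of $V \otimes \alg{E_\lambda}$ as a module over $G_\lambda \otimes \alg{E_\lambda}$ — or one simply invokes that $\GL_n$ is reductive and a representation in characteristic zero is semisimple iff it is so after any field extension.)

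For the second assertion, absolute irreducibility of $\rho_\lambda$ in particular implies it is semisimple, so $G_\lambda$ is reductive by the first part. The statement that the natural representation of $G_\lambda$ is absolutely irreducible is then immediate from the dictionary above: $V \otimes_{E_\lambda} \alg{E_\lambda}$ is irreducible as a module over $G_\lambda \otimes_{E_\lambda} \alg{E_\lambda}$ exactly because it is irreducible as a $\Gamma$-module after extending scalars, which is the definition of $\rho_\lambda$ being absolutely irreducible.

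The main obstacle here is nothing deep — it is purely bookkeeping about extension of scalars, since $E_\lambda$ is not algebraically closed, so one has to make sure that "reductive" and "absolutely irreducible" are being tested over $\alg{E_\lambda}$ and that the density of $\rho_\lambda(\Gamma)$ correctly translates $\Gamma$-stable subspaces into $G_\lambda$-submodules. Since this is quoted as \cite[Lemma 6.9]{LP92}, in the paper itself I would simply cite it rather than reproduce the argument; if a proof were wanted, the sketch above is complete modulo these standard base-change remarks.
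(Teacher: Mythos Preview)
Your argument is correct and is exactly the standard one. Note, however, that the paper does not prove this lemma at all: it merely recalls the statement and cites \cite[Lemma~6.9]{LP92}, precisely as you anticipated in your final paragraph. So there is nothing to compare against; your sketch is a perfectly good justification for a result the authors chose to quote rather than reprove.
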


In the next sections, we consider the problem of studying the algebraic monodromy groups $G_\lambda$ of $\mathcal{R} = \set{\rho_\lambda}_{\lambda \in \Lambda}$ varying $\lambda$. The hope in this context is to find a global algebraic subgroup $G$ of $\GL_{n, E}$ defined over $E$ such that $G_\lambda$ is conjugate to $G \times_E E_\lambda$ over $E_\lambda$ for any $\lambda \in \Lambda$. This is the prototype of a \emph{$\lambda$-independence} problem for the compatible system $\mathcal{R}$. Unfortunately, proving a $\lambda$-independence result of this form in the abstract setting is actually too optimistic. For this reason, we will focus on weaker $\lambda$-independence statements.

\section{The formal character and the variety of characteristic polynomials} \label{formal_character_variety_characteristic_polynomials}
 
If $G$ is a reductive group over a field $k$ of characteristic zero, $T \subset G$ is a maximal torus of $G$ over~$k$, and $\rho : G \ra \GL_{n, k}$ is a representation of $G$ over $k$, we define the \emph{formal character} of $\rho$ to be the restriction of $\rho$ to $T$.

Let $\mathrm{ch} : \GL_{n, k} \ra \mathbb{G}_{m, k} \times \mathbb{A}^{n-1}_k$ be the morphism which associates to a matrix the coefficients of its characteristic polynomial. Given a connected reductive algebraic subgroup~$G$~of~$\GL_{n, k}$, and a maximal torus $T \subset G$ over $k$, we have that $\mathrm{ch} (G) = \mathrm{ch} (T)$ is a variety defined over $\Q$, which determines uniquely the formal character of the natural representation $G \ra \GL_{n, k}$ up to isomorphism, see \cite[\S 4]{LP92}.

Let $\mathbb{G}_{m, k}^n$ be the split maximal torus of $\GL_{n, k}$. The Weyl group of $\GL_{n, k}$ with respect to $\mathbb{G}_{m, k}^n$ is the symmetric group $\mathfrak{S}_n$, which acts by permutation of factors. Let $T_0 \subset \mathbb{G}_{m, k}^n$ be a subtorus such that $T_0 \times_k \alg{k}$ is conjugate to $T \times_k \alg{k}$ over $\alg{k}$. Since the semisimple part of any point in $G$ can be conjugate into $T_0$ over $\alg{k}$, we have that~$\mathrm{ch} (G) = \mathrm{ch} (T_0)$ pointwise. For every~$\sigma \in \mathfrak{S}_n \backslash \mathrm{Cent}_{\mathfrak{S}_n} (T_0)$, we define a proper subgroup $H_\sigma \subset T_0$ in the following way. We let $H_\sigma = \set{t \in T \, : \, \sigma(t) = t}$ if $\sigma(T_0) = T_0$, and $H_\sigma = T_0 \cap \sigma (T_0)$ otherwise. We define now $Y$ to be the union of all $\mathrm{ch} (H_\sigma)$. This is a Zariski-closed proper subset of~$\mathrm{ch} (T_0) = \mathrm{ch} (G)$. We say that a point $g \in G$ is \emph{$\Gamma$-regular} if $\mathrm{ch} (g) \notin Y$. We have the following result.

\begin{proposition} [{\cite[Proposition 4.7]{LP92}}] \label{gamma_regular_tori}
$\phantom{aaaa}\\$
\vspace{-0.5cm}
\begin{enumerate}
\item[$(1)$] For every $x \in (\mathrm{ch} (G) \setminus Y) (k)$, there exist a torus $T \subset \GL_{n, k}$, and an element $t \in T(k)$, such that $\mathrm{ch} (T) = \mathrm{ch} (G)$ and $\mathrm{ch} (t) = x$. The pair $(t, T)$ is unique up to conjugation by $\GL_n (k)$.
\item[$(2)$] Let $g \in G (k)$ be $\Gamma$-regular. Then $g$ lies in a unique maximal torus $T$ of $G$, and the $\GL_n (k)$-conjugacy class of $(g, T)$ in uniquely determined by $\mathrm{ch} (g)$ and $\mathrm{ch} (G)$.
\end{enumerate}
\end{proposition}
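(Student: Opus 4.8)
The plan is to reduce both parts to a close study of the fibres of $\mathrm{ch}$ restricted to $T_0$ over $\Gamma$-regular points, the whole argument hinging on one consequence of the hypothesis $x \notin Y$: if $\bar{t} \in T_0(\alg{k})$ has $\mathrm{ch}(\bar{t}) = x \notin Y$, then two eigenvalues $\chi_i(\bar{t}), \chi_j(\bar{t})$ of $\bar{t}$ coincide \emph{only} when $\chi_i = \chi_j$ already as characters of $T_0$. Indeed, if $\chi_i(\bar{t}) = \chi_j(\bar{t})$ but $\chi_i \neq \chi_j$ on $T_0$, then the transposition $\tau = (i\,j)$ lies in $\mathfrak{S}_n \setminus \mathrm{Cent}_{\mathfrak{S}_n}(T_0)$ and fixes $\bar{t}$, so $\bar{t} \in H_\tau$ (whether or not $\tau(T_0) = T_0$) and $x = \mathrm{ch}(\bar{t}) \in \mathrm{ch}(H_\tau) \subset Y$ — a contradiction. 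Writing $\{1, \dots, n\} = B_1 \sqcup \dots \sqcup B_r$ for the partition into the subsets on which the $\chi_i$ agree, this says $\bar{t} = \bigoplus_a \mu_a \cdot \mathrm{id}_{B_a}$ with the $\mu_a$ pairwise distinct; hence $Z_{\GL_n}(\bar{t}) = \prod_a \GL_{|B_a|}$ contains $T_0$ in its centre, so in particular $Z_{\GL_n}(\bar{t}) \subset N_{\GL_n}(T_0)$. The same dichotomy shows that if $\bar{t}, \bar{t}' \in T_0$ both have characteristic polynomial $x \notin Y$, then $\bar{t}' = \sigma(\bar{t})$ for some $\sigma \in \mathfrak{S}_n$ with $\sigma(T_0) = T_0$ (else $\bar{t}' \in T_0 \cap \sigma(T_0) = H_\sigma$ and again $x \in Y$); that is, $\mathrm{ch}^{-1}(x) \cap T_0$ is a single orbit of $\{\sigma \in \mathfrak{S}_n : \sigma(T_0) = T_0\}$.

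For existence in $(1)$, I would take the monic polynomial $P \in k[X]$ with non-zero constant term corresponding to $x$, factor it as $\prod_i Q_i^{e_i}$ into distinct monic irreducibles (separable, as $\chr k = 0$), and let $t \in \GL_n(k)$ be the block-diagonal sum of $e_i$ copies of the companion matrix of $Q_i$; this is semisimple with $\mathrm{ch}(t) = x$. Since $x \in \mathrm{ch}(G) = \mathrm{ch}(T_0)$ is the characteristic polynomial of some $\bar{t} \in T_0(\alg{k})$, and semisimple matrices with a common characteristic polynomial are $\GL_n(\alg{k})$-conjugate, we may write $\bar{t} = g_0 t g_0^{-1}$; then $T \eqdef g_0^{-1} T_0 g_0$ is a torus containing $t$ with $\mathrm{ch}(T) = \mathrm{ch}(T_0) = \mathrm{ch}(G)$. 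The real point is to descend $T$ to $k$: for $\gamma \in \Gamma_k$ one checks that $c_\gamma \eqdef g_0 \cdot {}^{\gamma}(g_0)^{-1}$ conjugates ${}^{\gamma}\bar{t}$ to $\bar{t}$; as $T_0$ is split over $k$, the element ${}^{\gamma}\bar{t}$ again lies in $T_0$ with characteristic polynomial ${}^{\gamma}x = x$, so by the orbit statement ${}^{\gamma}\bar{t} = \sigma_\gamma(\bar{t})$ for some $\sigma_\gamma \in \mathfrak{S}_n$ normalising $T_0$; hence $c_\gamma$ and $\sigma_\gamma^{-1}$ differ by an element of $Z_{\GL_n}({}^{\gamma}\bar{t}) = Z_{\GL_n}(\bar{t}) \subset N_{\GL_n}(T_0)$. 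Therefore $c_\gamma \in N_{\GL_n}(T_0)(\alg{k})$, so $T$ is $\Gamma_k$-stable, hence defined over $k$, and $t \in T(k)$.

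For uniqueness in $(1)$, let $(t, T)$ and $(t', T')$ be two admissible pairs. As $t, t'$ lie in tori they are semisimple, and they have equal characteristic polynomial, so they are $\GL_n(k)$-conjugate; conjugating, we may take $t = t'$. The block-scalar analysis of the first paragraph, transported over $\alg{k}$ by conjugating $t$ into $T_0$, forces every element of a torus containing $t$ with characteristic-polynomial variety $\mathrm{ch}(G)$ to commute with all of $Z_{\GL_n}(t)$; thus $T, T' \subset Z(Z_{\GL_n}(t))$. Now over $\alg{k}$, $g_0 T g_0^{-1}$ is a subtorus of $Z(Z_{\GL_n}(\bar{t}))$ of dimension $\dim T_0$ with characteristic-polynomial variety $\mathrm{ch}(T_0)$, containing $\bar{t}$; a dimension count inside $\mathrm{ch}^{-1}(\mathrm{ch}(T_0))$ (cf.\ \cite[\S 4]{LP92}) shows it equals $n_0 T_0 n_0^{-1}$ for some $n_0 \in \GL_n(\alg{k})$ normalising $Z(Z_{\GL_n}(\bar{t}))$, and the single-orbit property then forces $n_0 T_0 n_0^{-1} = T_0$. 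Hence $g_0 T g_0^{-1} = T_0 = g_0 T' g_0^{-1}$, i.e.\ $T = T'$, and the two pairs are $\GL_n(k)$-conjugate.

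Part $(2)$ reduces to $(1)$ once one knows a $\Gamma$-regular $g \in G(k)$ is regular semisimple in $G$: identifying a maximal torus of $G$ with $T_0$ over $\alg{k}$ and conjugating the semisimple part $g_s$ of $g$ into $T_0$, its image $\bar{t}$ is block-scalar with pairwise distinct block-eigenvalues, and since the roots of $G \subset \GL_n$ relative to $T_0$ lie among the characters $\chi_i - \chi_j$ with $\chi_i \neq \chi_j$ on $T_0$, none of them vanishes at $\bar{t}$; so $\bar{t}$ is regular, $Z_G(g_s)$ is a maximal torus, and the unipotent part of $g$ is trivial. Thus $T \eqdef Z_G(g)$ is the unique maximal torus of $G$ containing $g$, it is defined over $k$ since $g \in G(k)$, and as $\mathrm{ch}(T) = \mathrm{ch}(G)$ and $\mathrm{ch}(g) = x$ the pair $(g, T)$ is admissible for $(1)$, whose uniqueness clause gives the last assertion. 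I expect the main obstacle to be the control of fields of definition — the Galois descent of $T$ in the existence step — and it is exactly there that the choice of $Y$ (through the single-orbit property) and the structure of centralizers of block-scalar elements become indispensable.
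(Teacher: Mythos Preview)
The paper does not prove this proposition; it is merely quoted from \cite[Proposition~4.7]{LP92} with no argument supplied, so there is nothing in the present paper to compare your attempt against.

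For what it is worth, your argument is correct and follows the natural route: the definition of $Y$ forces any $\bar t \in T_0(\alg k)$ with $\mathrm{ch}(\bar t)\notin Y$ to be block-scalar with pairwise distinct block eigenvalues, and makes $\mathrm{ch}^{-1}(x)\cap T_0$ a single orbit of $\{\sigma\in\mathfrak S_n:\sigma(T_0)=T_0\}$; both the Galois descent in the existence step and the rigidity in the uniqueness step then follow. One small remark on the uniqueness paragraph: the containment $T\subset Z(Z_{\GL_n}(t))$ is not needed, and the subsequent ``dimension count'' is a little opaque as written. A cleaner way is to argue directly: over $\alg k$ write $g_0 T g_0^{-1}=hT_0h^{-1}$ for some $h\in\GL_n(\alg k)$; then $h^{-1}\bar t\,h\in T_0$ still has $\mathrm{ch}=x\notin Y$, so by the single-orbit property $h^{-1}\bar t\,h=\sigma(\bar t)$ with $\sigma(T_0)=T_0$, whence $h\sigma\in Z_{\GL_n}(\bar t)$ centralises $T_0$ and $g_0Tg_0^{-1}=(h\sigma)T_0(h\sigma)^{-1}=T_0$. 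This gives $T=T'$ immediately. Your treatment of part~(2) (showing a $\Gamma$-regular element is regular semisimple in $G$ via the block structure of the roots) is fine.
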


Let $(\Gamma, \set{F_\alpha}_{\alpha \in A})$ be a group with Frobenii, and let $\mathcal{R} = \set{\rho_{\lambda}}_{\lambda \in \Lambda}$ be a compatible system of rank $n$ semisimple representations of $\Gamma$ with coefficients in a number field $E$. Keep notations as in \S \ref{algebraic_monodromy_groups}. We have:

\begin{proposition} [{\cite[Proposition 6.12]{LP92}}] \label{independence_formal_character}
The formal character of the natural representation $G_{\lambda}^\circ \times_{E_\lambda} \alg{E}_\lambda \ra \GL_{n, \alg{E}_\lambda}$ is independent of $\lambda$. More precisely, there exist a torus~$T_0$~over $E$ and a faithful representation $\rho_0 : T_0 \ra \GL_{n, E}$ such that $T_0 \times_{E} \alg{E}_\lambda$ is isomorphic to a maximal torus in $G_{\lambda}^\circ \times_{E_\lambda} \alg{E}_\lambda$, and $\rho_0 \otimes_{E} \alg{E}_\lambda$ is equivalent to the formal character of~$G_{\lambda}^\circ \times_{E_\lambda} \alg{E}_\lambda \ra \GL_{n, \alg{E}_\lambda}$ for all $\lambda \in \Lambda$.
\end{proposition}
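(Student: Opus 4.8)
The plan is to establish the more precise assertion, which contains the first. I would begin by showing that the closed subvariety $\mathrm{ch}(G_\lambda^\circ) \subseteq \mathbb{G}_m \times \mathbb{A}^{n-1}$ (defined over $\Q$ by \S\ref{formal_character_variety_characteristic_polynomials}, since $G_\lambda^\circ$, being the identity component of the reductive group $G_\lambda$, is connected reductive) is independent of $\lambda$. For each $\lambda$ set $\Gamma_\lambda = \rho_\lambda^{-1}(G_\lambda^\circ(\alg{E}_\lambda))$; as $G_\lambda^\circ$ is open and closed in $G_\lambda$ for the $\lambda$-adic topology and $\rho_\lambda$ is continuous, $\Gamma_\lambda$ is an open subgroup of $\Gamma$ of finite index, and a coset argument shows that the image under $\rho_\lambda$ of any finite-index subgroup of $\Gamma$ contained in $\Gamma_\lambda$ is Zariski-dense in the connected group $G_\lambda^\circ$.

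I would then fix $\lambda, \lambda' \in \Lambda$ and put $S = \set{F_\alpha \, : \, (\alpha, \lambda), (\alpha, \lambda') \in \mathcal{X} \text{ and } F_\alpha \in \Gamma_\lambda \cap \Gamma_{\lambda'}}$. By condition $(3)$ of Definition \ref{compatible_systems_definition} the set $\set{F_\alpha \, : \, (\alpha, \lambda), (\alpha, \lambda') \in \mathcal{X}}$ is dense in $\Gamma$, so $S$ is dense in the open finite-index subgroup $\Gamma_\lambda \cap \Gamma_{\lambda'}$, and hence $\rho_\lambda(S)$ is Zariski-dense in $G_\lambda^\circ$ by the previous paragraph; consequently $\set{\mathrm{ch}(\rho_\lambda(F_\alpha)) \, : \, F_\alpha \in S}$ is Zariski-dense in $\mathrm{ch}(G_\lambda^\circ)$. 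On the other hand, for $F_\alpha \in S$ condition $(2)$ gives $\mathrm{ch}(\rho_\lambda(F_\alpha)) = \mathrm{ch}(\rho_{\lambda'}(F_\alpha))$, and this point lies in $\mathrm{ch}(G_{\lambda'}^\circ)$ because $F_\alpha \in \Gamma_{\lambda'}$. Thus a Zariski-dense subset of $\mathrm{ch}(G_\lambda^\circ)$ is contained in the closed set $\mathrm{ch}(G_{\lambda'}^\circ)$, so $\mathrm{ch}(G_\lambda^\circ) \subseteq \mathrm{ch}(G_{\lambda'}^\circ)$; as $S$ is symmetric in $\lambda$ and $\lambda'$ the reverse inclusion holds too. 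Write $C$ for this common variety and $Y \subsetneq C$ for the proper closed subset attached to it in \S\ref{formal_character_variety_characteristic_polynomials} (it depends only on $C$).

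Running the same density argument at a single place $\lambda_0$, the points $\mathrm{ch}(\rho_{\lambda_0}(F_\alpha))$ with $(\alpha, \lambda_0) \in \mathcal{X}$ and $F_\alpha \in \Gamma_{\lambda_0}$ lie in $C(E)$ (condition $(2)$) and are Zariski-dense in $C$, hence are not all contained in $Y$; I would fix one such $\alpha$ with $x \eqdef \mathrm{ch}(\rho_{\lambda_0}(F_\alpha)) \in (C \setminus Y)(E)$. Applying Proposition \ref{gamma_regular_tori}$(1)$ with $k = E$ to $x$ produces a torus $T_0 \subseteq \GL_{n, E}$ over $E$ and $t \in T_0(E)$ with $\mathrm{ch}(T_0) = C$ and $\mathrm{ch}(t) = x$; I take $\rho_0 : T_0 \hra \GL_{n, E}$, which is faithful. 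For an arbitrary $\lambda$, let $T_\lambda$ be a maximal torus of $G_\lambda^\circ \times_{E_\lambda} \alg{E}_\lambda$; then $\mathrm{ch}(T_\lambda) = \mathrm{ch}(G_\lambda^\circ) = C$, and since $x \in C(E) \subseteq C(\alg{E}_\lambda)$ there is $t_\lambda \in T_\lambda(\alg{E}_\lambda)$ with $\mathrm{ch}(t_\lambda) = x \notin Y$. The uniqueness clause of Proposition \ref{gamma_regular_tori}$(1)$ over $\alg{E}_\lambda$ then makes the pairs $(t, T_0 \times_E \alg{E}_\lambda)$ and $(t_\lambda, T_\lambda)$ conjugate under $\GL_n(\alg{E}_\lambda)$; hence $T_0 \times_E \alg{E}_\lambda \cong T_\lambda$ is a maximal torus of $G_\lambda^\circ \times_{E_\lambda} \alg{E}_\lambda$, and conjugacy of the corresponding embeddings into $\GL_{n, \alg{E}_\lambda}$ says precisely that $\rho_0 \otimes_E \alg{E}_\lambda$ is equivalent to the formal character of $G_\lambda^\circ \times_{E_\lambda} \alg{E}_\lambda \ra \GL_{n, \alg{E}_\lambda}$. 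As this holds for every $\lambda$ with the same pair $(T_0, \rho_0)$, both assertions follow.

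I expect the real difficulty to lie not in any single step but in coordinating three routine-but-delicate points: the passage between analytic density of Frobenius elements in the finite-index subgroups $\Gamma_\lambda$ and Zariski density of their images in the connected groups $G_\lambda^\circ$, which rests on the fact that a finite-index subgroup of a Zariski-dense subgroup is Zariski-dense in a connected group; reading the compatibilities of characteristic polynomials supplied by condition $(2)$ as equalities of $E$-points of the $\Q$-variety $\mathrm{ch}(-)$, so that one fixed such point serves for all $\lambda$ simultaneously; and, most importantly, descending the torus $T_0$ and the representation $\rho_0$ to $E$ itself rather than only to the completions $\alg{E}_\lambda$, which is exactly what combining the $E$-rationality in condition $(2)$ with Proposition \ref{gamma_regular_tori}$(1)$ over $E$ is meant to accomplish.
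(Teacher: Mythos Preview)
The paper does not give its own proof of this proposition; it is quoted as background from \cite[Proposition~6.12]{LP92} without argument. Your reconstruction is correct and is essentially the Larsen--Pink proof: establish that the variety $\mathrm{ch}(G_\lambda^\circ)$ is independent of $\lambda$ by a Zariski-density argument on Frobenii lying in the finite-index subgroup $\Gamma_\lambda \cap \Gamma_{\lambda'}$, then descend the torus to $E$ by applying Proposition~\ref{gamma_regular_tori}(1) to an $E$-rational $\Gamma$-regular point of the common variety~$C$.
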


\begin{proposition} [{\cite[Proposition 6.14]{LP92}}] \label{independence_connected_components}
There is an open normal subgroup~$\Gamma' \subset \Gamma$~such that $\rho_\lambda$ induces an isomorphism $\Gamma / \Gamma' \isom G_\lambda / G_\lambda^\circ$ for all $\lambda \in \Lambda$.
\end{proposition}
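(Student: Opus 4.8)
The plan is to produce the subgroup $\Gamma'$ as the common kernel of the (quotient) maps $\Gamma \to G_\lambda / G_\lambda^\circ$, and then to show that this common kernel is independent of $\lambda$ by a characteristic-polynomial argument, exploiting exactly the compatibility of the system. First I would fix, for each $\lambda \in \Lambda$, the composite continuous homomorphism $\pi_\lambda : \Gamma \xrightarrow{\rho_\lambda} G_\lambda(E_\lambda) \to (G_\lambda / G_\lambda^\circ)(E_\lambda)$, and set $\Gamma_\lambda' = \ker \pi_\lambda$. Since $\rho_\lambda(\Gamma)$ is Zariski-dense in $G_\lambda$ and $G_\lambda / G_\lambda^\circ$ is a finite (étale) group scheme over $E_\lambda$, the image of $\pi_\lambda$ is all of $G_\lambda / G_\lambda^\circ$; hence $\Gamma_\lambda'$ is an open normal subgroup of $\Gamma$ and $\rho_\lambda$ induces an isomorphism $\Gamma / \Gamma_\lambda' \isom G_\lambda / G_\lambda^\circ$. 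So the entire content is the assertion that $\Gamma_\lambda'$ does not depend on $\lambda$.

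To see the independence, I would argue that $\gamma \in \Gamma_\lambda'$ if and only if $\rho_\lambda(\gamma)$ lies in the identity component $G_\lambda^\circ$, and try to detect membership in $G_\lambda^\circ$ via characteristic polynomials together with the formal-character data supplied by Proposition \ref{independence_formal_character}. The key observation is that for $\gamma$ in a coset of $\Gamma_\lambda'$, the Zariski closure of $\langle \rho_\lambda(\gamma)\rangle \cdot \rho_\lambda(\Gamma_\lambda')$ meets $G_\lambda^\circ$ in a coset determined by $\gamma$, and one can pin down which coset of $G_\lambda / G_\lambda^\circ$ one is in by looking at the characteristic polynomials of $\rho_\lambda(\gamma F_\alpha)$ as $F_\alpha$ ranges over Frobenius elements lying in a fixed coset of $\Gamma_\lambda'$: since $G_\lambda^\circ$ is connected and $\rho_\lambda$ restricted to $\Gamma_\lambda'$ has Zariski-dense image in $G_\lambda^\circ$, the set of characteristic polynomials $\{\,\mathrm{ch}(\rho_\lambda(g)) : g \in \gamma\,\Gamma_\lambda'\,\}$ equals $\mathrm{ch}(\rho_\lambda(\gamma) G_\lambda^\circ)$, which is a $\mathrm{ch}(G_\lambda^\circ)$-coset-like subvariety. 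Concretely, I would invoke Proposition \ref{gamma_regular_tori}: pick $F_\alpha$ with $(\alpha,\lambda) \in \mathcal X$ such that $\rho_\lambda(F_\alpha)$ is $\Gamma$-regular (density of Frobenii plus the fact that the $\Gamma$-regular locus is dense make this possible simultaneously for finitely many $\lambda$ by condition (3) of Definition \ref{compatible_systems_definition}); then $\rho_\lambda(F_\alpha)$ lies in a unique maximal torus $T_\lambda$ of $G_\lambda$, the $\GL_n$-conjugacy class of $(\rho_\lambda(F_\alpha), T_\lambda)$ is determined by $\mathrm{ch}(\rho_\lambda(F_\alpha))$ and $\mathrm{ch}(G_\lambda) = \mathrm{ch}(T_0)$, and $T_\lambda$ is contained in $G_\lambda^\circ$. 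Repeating this with $\gamma F_\alpha$ in place of $F_\alpha$ when $\gamma$ runs over a fixed coset of $\Gamma_\lambda'$, the coset is recognised purely through $E$-valued characteristic polynomial data, which by condition (2) of Definition \ref{compatible_systems_definition} is the same for all $\lambda$ with $(\alpha, \lambda) \in \mathcal X$.

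The cleanest route, and the one I would actually write up, is: let $\gamma_1, \dots, \gamma_r$ be coset representatives for one $\Gamma / \Gamma_{\lambda_0}'$; using condition (3), choose Frobenius elements $F_{\alpha_1}, \dots, F_{\alpha_r}$ with each $\gamma_i^{-1}F_{\alpha_i} \in \Gamma_{\lambda_0}'$ and with $(\alpha_i, \lambda) \in \mathcal X$ for a chosen finite set of $\lambda$; then for any two places $\lambda, \lambda'$ in that set, the characteristic polynomials of $\rho_\lambda(F_{\alpha_i})$ and $\rho_{\lambda'}(F_{\alpha_i})$ agree and lie in the prescribed $\mathrm{ch}$-component, forcing $F_{\alpha_i}$ to lie in the same coset of $G_{\lambda'} / G_{\lambda'}^\circ$ under $\rho_{\lambda'}$ as it does under $\rho_\lambda$ — here one uses that the components of $\mathrm{ch}^{-1}(\text{a component of }\mathrm{ch}(G_\lambda))$ inside $G_\lambda$ are in bijection with $G_\lambda / G_\lambda^\circ$, a fact that follows from $G_\lambda^\circ$ being connected reductive with $\mathrm{ch}(G_\lambda^\circ) = \mathrm{ch}(G_\lambda)$ and from Proposition \ref{gamma_regular_tori}(2). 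A standard limiting/density argument (every element of $\Gamma$ is a limit of Frobenii, and $G_\lambda^\circ$ is open and closed in $G_\lambda$ in the $\lambda$-adic topology) then upgrades this from Frobenius elements to all of $\Gamma$, giving $\Gamma_\lambda' = \Gamma_{\lambda'}'$; call this common subgroup $\Gamma'$, and the statement follows. The main obstacle I anticipate is the bookkeeping that identifies "which connected component of $G_\lambda$ a given element lies in" with purely characteristic-polynomial data that is manifestly $\lambda$-independent; once that identification is made precise via Proposition \ref{gamma_regular_tori}, the rest is the density argument built into the axioms of a compatible system. (This is essentially the content of \cite[Proposition 6.14]{LP92}, adapted to number-field coefficients.)
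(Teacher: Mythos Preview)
The paper does not give its own proof of this proposition; it simply quotes \cite[Proposition~6.14]{LP92}. So the comparison is really between your proposal and the Larsen--Pink argument. Your overall strategy --- detect $\Gamma'_\lambda=\rho_\lambda^{-1}(G_\lambda^\circ)$ through $\lambda$-independent characteristic-polynomial data, using the $\lambda$-independent variety $V=\mathrm{ch}(G_\lambda^\circ)$ supplied by Proposition~\ref{independence_formal_character} --- is indeed the idea behind the original proof. But the specific mechanism you propose has a genuine gap.

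First, the assertion ``$\mathrm{ch}(G_\lambda^\circ)=\mathrm{ch}(G_\lambda)$'' is false in general: take $G_\lambda=\langle \zeta I_n\rangle\cdot\SL_n\subset\GL_n$ with $\zeta^n\neq 1$; elements of the non-identity cosets have determinant $\neq 1$ and their characteristic polynomials lie outside $\mathrm{ch}(\SL_n)$. Second, and more seriously, Proposition~\ref{gamma_regular_tori} is formulated only for a \emph{connected} reductive $G$: $\Gamma$-regularity and ``lies in a unique maximal torus'' are statements about elements of $G_\lambda^\circ$, and an element of a non-identity component need not lie in any torus of $G_\lambda$ at all. Concretely, in $G_\lambda=N(T)\subset\GL_2$ the swap matrix $w$ and $\mathrm{diag}(1,-1)\in T$ have the same characteristic polynomial $t^2-1$ but sit in different components. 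So the step ``the characteristic polynomial of $\rho_\lambda(F_{\alpha_i})$ forces $F_{\alpha_i}$ to lie in the same coset of $G_{\lambda'}/G_{\lambda'}^\circ$'' is not justified: a single characteristic polynomial, even at a $\Gamma$-regular Frobenius, does not determine the connected component.

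What is actually needed is closer to your first paragraph than to your ``cleanest route'': one must use the characteristic polynomials of an entire coset, not of a single element. The missing ingredient is the lemma that for every non-identity component $C$ of $G_\lambda$ one has $\overline{\mathrm{ch}(C)}\neq V$ as subvarieties of $\mathbb{G}_m\times\mathbb{A}^{n-1}$ (this is where Larsen--Pink do some work). Granting this, $\gamma\in\Gamma'_\lambda$ is equivalent to the $\lambda$-independent condition that, for all sufficiently small open normal $\Delta\subset\Gamma$, the Zariski closure of $\{\mathrm{ch}(\rho_\lambda(F_\alpha)):F_\alpha\in\gamma\Delta,\ (\alpha,\lambda)\in\mathcal X\}$ equals $V$; the density/limit argument you sketch then finishes the proof. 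Your write-up should replace the single-element $\Gamma$-regularity step by this coset-level characterisation.
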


Note that Proposition \ref{independence_connected_components} implies that the group $G_\lambda / G_\lambda^\circ$ of connected components of $G_\lambda$ is independent of $\lambda$. Also, if we let $A' = \set{\alpha \in A \, : \, F_\alpha \in \Gamma'}$, then the pair $(\Gamma', \set{F_\alpha}_{\alpha \in A'})$ is a group with Frobenii, and $\mathcal{R}' = \set{\rho'_\lambda = \rho_\lambda \! \mid_{\Gamma'}}_{\lambda \in \Lambda}$ forms a compatible system, with $\mathcal{X}' = \mathcal{X} \cap (A' \times \Lambda)$. Clearly, the algebraic monodromy group of~$\rho'_\lambda$~is~$G_\lambda^\circ$~for every $\lambda \in \Lambda$. 

\section{Frobenius tori and the splitting field of a compatible system} \label{frobenius_tori}

Let $(\Gamma, \set{F_\alpha}_{\alpha \in A})$ be a group with Frobenii, and let $\mathcal{R} = \set{\rho_{\lambda}}_{\lambda \in \Lambda}$ be a compatible system of rank $n$ semisimple representations of $\Gamma$ with coefficients in a number field $E$. Let us assume in this section that for every $\lambda \in \Lambda$ the reductive group $G_\lambda$ is connected. We have the following result.

\begin{proposition} [{\cite[Proposition 7.2]{LP92}}] \label{gamma_regular}
For any $\lambda \in \Lambda$, the set of all $\gamma \in \Gamma$ such that $\rho_\lambda (\gamma)$ is $\Gamma$-regular is open and dense in $\Gamma$.
\end{proposition}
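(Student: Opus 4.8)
The plan is to recast the statement geometrically and then check openness and density separately. Let $U_\lambda \subseteq G_\lambda$ denote the $\Gamma$-regular locus, i.e. the preimage under $\mathrm{ch}$ of $\mathrm{ch}(G_\lambda) \setminus Y$; the set appearing in the statement is then exactly $\rho_\lambda^{-1}\big(U_\lambda(E_\lambda)\big)$. The first thing I would record is that $U_\lambda$ is a nonempty, Zariski-dense open subset of $G_\lambda$: by the construction recalled in \S\ref{formal_character_variety_characteristic_polynomials} (see \cite[\S4]{LP92}), $Y$ is a \emph{proper} Zariski-closed subset of $\mathrm{ch}(G_\lambda)$, so its complement is a nonempty open subset, and hence so is its preimage $U_\lambda$ under the surjection $\mathrm{ch} : G_\lambda \sra \mathrm{ch}(G_\lambda)$; since $G_\lambda$ is connected — the running hypothesis of this section — it is irreducible, so the nonempty open $U_\lambda$ is automatically Zariski-dense in $G_\lambda$.

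Openness of $\rho_\lambda^{-1}\big(U_\lambda(E_\lambda)\big)$ is then formal. Because $G_\lambda$ is by definition the Zariski closure of $\rho_\lambda(\Gamma)$, the image of $\rho_\lambda$ lies in $G_\lambda(E_\lambda)$, and $\rho_\lambda : \Gamma \ra G_\lambda(E_\lambda)$ is continuous for the $\lambda$-adic topology on $G_\lambda(E_\lambda)$; since $U_\lambda$ is Zariski-open in $G_\lambda$, the subset $U_\lambda(E_\lambda)$ is open in $G_\lambda(E_\lambda)$, and the conclusion follows by taking preimages.

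For density, the idea I would exploit is that Zariski density of $\rho_\lambda(\Gamma)$ in $G_\lambda$ persists upon restricting to any coset of any open subgroup, precisely because $G_\lambda$ is connected. Concretely, let $\Gamma' \subseteq \Gamma$ be an open subgroup with coset representatives $\gamma_1, \dots, \gamma_m$; taking Zariski closures in the decomposition $\rho_\lambda(\Gamma) = \bigcup_{i=1}^m \rho_\lambda(\gamma_i \Gamma')$ exhibits the irreducible variety $G_\lambda$ as a finite union $\bigcup_{i=1}^m \overline{\rho_\lambda(\gamma_i\Gamma')}$ of Zariski-closed subsets, so one of them, say $\overline{\rho_\lambda(\gamma_j\Gamma')}$, equals $G_\lambda$; left-translating by $\rho_\lambda(\gamma_j)^{-1}$ gives $\overline{\rho_\lambda(\Gamma')} = G_\lambda$, and left-translating by $\rho_\lambda(\gamma)$ then gives $\overline{\rho_\lambda(\gamma\Gamma')} = G_\lambda$ for every $\gamma \in \Gamma$. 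Since the cosets $\gamma\Gamma'$ of open subgroups $\Gamma'$ form a basis of the topology of the profinite group $\Gamma$, density will follow once we observe that each Zariski-dense set $\rho_\lambda(\gamma\Gamma') \subseteq G_\lambda(E_\lambda)$ must meet the nonempty Zariski-open set $U_\lambda$: a point of the intersection lies in $U_\lambda(E_\lambda)$ and exhibits a $\Gamma$-regular element inside the given basic open subset of $\Gamma$.

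The step carrying the actual content — everything else being routine topology together with the input from \S\ref{formal_character_variety_characteristic_polynomials} — is the connectedness argument of the previous paragraph: it is the mechanism converting ``$\rho_\lambda(\Gamma)$ Zariski-dense in $G_\lambda$'' into the density of the preimage inside $\Gamma$, and it is the only place where the hypothesis that $G_\lambda$ is connected enters. One small companion fact to handle cleanly is that a subset of $G_\lambda(E_\lambda)$ whose Zariski closure is all of $G_\lambda$ must meet every nonempty Zariski-open subset (otherwise it would lie in a proper closed subset), so that the required $E_\lambda$-point of $U_\lambda$ in the prescribed coset is produced for free, with no separate rationality input.
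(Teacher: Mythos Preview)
Your argument is correct. Note, however, that the paper does not supply its own proof of this proposition: it is quoted verbatim from \cite[Proposition~7.2]{LP92} and used as a black box, so there is no in-paper proof to compare against. That said, your approach is essentially the standard one (and matches the argument in \cite{LP92}): the $\Gamma$-regular locus $U_\lambda$ is a nonempty Zariski-open subset of the connected group $G_\lambda$, hence $\lambda$-adically open and Zariski-dense; openness of the preimage is then immediate from continuity, and density follows from the irreducibility trick showing that every coset of every open subgroup of $\Gamma$ has Zariski-dense image in $G_\lambda$ and therefore meets $U_\lambda$.
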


For every $(\alpha, \lambda) \in \mathcal{X}$, the point $\mathrm{ch} (\rho_\lambda (F_\alpha)) \in (\mathbb{G}_{m, E_\lambda} \times \mathbb{A}^{n-1}_{E_\lambda}) (E)$ depends only on $\alpha$, and so the condition on $\rho_\lambda (F_\alpha)$ being $\Gamma$-regular does depend only on $\alpha$ as well. When this condition holds, we simply say that $\alpha$ is $\Gamma$-regular. By Proposition \ref{gamma_regular}, if we replace $A$ with the set of $\Gamma$-regular $\alpha \in A$ the corresponding set $\mathcal{X} \subset A \times \Lambda$ still satisfies the conditions $(1)$-$(3)$ in Definition \ref{compatible_systems_definition}. From now on, let us assume that every $\alpha \in A$ is $\Gamma$-regular.

By Proposition \ref{gamma_regular_tori}, for every $\alpha \in A$ we are then given a torus $T_{\alpha} \subset \GL_{n, E}$, canonical up to conjugacy, associated to $\mathrm{ch} (\rho_\lambda (F_\alpha))$, for $\lambda \in \Lambda$ such that $(\alpha, \lambda) \in \mathcal{X}$, so that the set~$\mathcal{X}$~satisfies the following conditions.
\begin{enumerate}
\item[$(1)$] For every $\alpha \in A$, $(\alpha, \lambda) \in \mathcal{X}$ for all but finitely many $\lambda \in \Lambda$.
\item[$(2)$] For all $(\alpha, \lambda) \in \mathcal{X}$, $T_\alpha \times_{E} E_\lambda$ is conjugate to a maximal torus of $G_\lambda$ over $E_\lambda$.
\item[$(3)$] For all pairwise distinct places $\lambda_1, \ldots, \lambda_k \in \Lambda$, and all maximal tori $T_i$ of $G_{\lambda_i}$, there exists an $\alpha \in A$ such that, for every $i$, $(\alpha, \lambda_i) \in \mathcal{X}$ and $T_\alpha \times_{E} E_{\lambda_i}$ is conjugate to $T_i$ over $E_{\lambda_i}$.
\end{enumerate}

The tori $T_\alpha$ over $E$ are called \emph{Frobenius tori}. For every $\alpha \in A$, let $L_\alpha$ be the splitting field of $T_\alpha$ over $E_\alpha$, that is the smallest degree extension $L_\alpha / E$ such that $T_\alpha \times_{E} L_\alpha$ is split. Let $L$ be the intersection of all $L_\alpha$. We call this field the \emph{splitting field} of the compatible system $\mathcal{R}$.

The role of Frobenius tori and the splitting field of a compatible system is clarified by the following results.

\begin{proposition} [{\cite[Proposition 8.9]{LP92}}] \label{unramified}
There exists a subset $\Lambda'$ of $\Lambda$ of residual Dirichlet density $1$ such that for every $\lambda \in \Lambda'$ the connected reductive group $G_\lambda$ is unramified, and split over $L_{\mathfrak{l}}$, for some finite place $\mathfrak{l}$ of $L$ lying above $\lambda$.
\end{proposition}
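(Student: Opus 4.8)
The plan is to follow the strategy of \cite[\S 8]{LP92}, combining the $\lambda$-independence of the formal character (Proposition \ref{independence_formal_character}) with the good-reduction properties of the Frobenius tori and an integral-model analysis of the $\rho_\lambda$.

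\textbf{Reductions.} Since all $G_\lambda$ are assumed connected, Proposition \ref{independence_formal_character} gives that the formal character of $G_\lambda \times_{E_\lambda} \alg{E}_\lambda \ra \GL_{n,\alg{E}_\lambda}$ --- in particular the rank of $G_\lambda$ --- is independent of $\lambda$. I would first discard from $\Lambda$ the places $\lambda$ whose residue characteristic is either small relative to $n$ (so as to avoid torsion primes for the relevant root data, primes dividing the order of a Weyl group, and primes dividing suitable discriminants) or ramifies in $L$; this removes a set of residual Dirichlet density $0$. For the surviving $\lambda$, each $\rho_\lambda$ having compact image can be conjugated into $\GL_n(\mathcal{O}_{E_\lambda})$.

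\textbf{Unramifiedness of $G_\lambda$.} Let $\mathcal{G}_\lambda$ be the schematic closure of $\rho_\lambda(\Gamma)$ in $\GL_{n,\mathcal{O}_{E_\lambda}}$, a flat affine group scheme with generic fibre $G_\lambda$. I would show that, off a further set of $\lambda$ of density $0$, the special fibre of $\mathcal{G}_\lambda$ is connected reductive of dimension $\dim G_\lambda$; then $\mathcal{G}_\lambda$ is a reductive $\mathcal{O}_{E_\lambda}$-group scheme and hence $G_\lambda$ is unramified. The inputs are: the schematic closure inside $\mathcal{G}_\lambda$ of a Frobenius torus $T_\alpha$ of good reduction at $\lambda$ is a subtorus whose special fibre is again a torus of the expected rank; the $\lambda$-independence and integrality of the characteristic polynomials $\mathrm{ch}(\rho_\lambda(F_\alpha))$ control the semisimple classes in the special fibre; and the exclusion of the bad primes, together with a flatness-and-dimension count, forces that special fibre to be reductive of full dimension. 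This is where the integral-model arguments of \cite[\S 8]{LP92} do their work.

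\textbf{Splitting over $L_{\mathfrak l}$.} Being unramified, $G_\lambda$ is quasi-split, so its splitting field $M_\lambda / E_\lambda$ --- the fixed field of the kernel of the $*$-action of $\Gamma_{E_\lambda}$ on the based root datum of $G_\lambda \times_{E_\lambda} \alg{E}_\lambda$ --- is unramified, and $G_\lambda \times_{E_\lambda} M_\lambda$ is split. For each $\alpha$ with $(\alpha,\lambda) \in \mathcal{X}$ the Frobenius torus $T_\alpha \times_E E_\lambda$ is conjugate over $E_\lambda$ to a maximal torus of $G_\lambda$; since a Galois element acting trivially on the cocharacter lattice of a maximal torus fixes every Borel containing it, hence acts trivially on the based root datum, one obtains $M_\lambda \subseteq (L_\alpha)_{\mathfrak{l}_\alpha}$ for a place $\mathfrak{l}_\alpha \mid \lambda$ of $L_\alpha$. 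Conversely, condition $(3)$ realises the maximally split maximal torus of $G_\lambda$, which splits exactly over $M_\lambda$, as a Frobenius torus, so in fact $M_\lambda = \bigcap_\alpha (L_\alpha)_{\mathfrak{l}_\alpha}$ (intersection over valid $\alpha$, formed inside a fixed $\alg{E}_\lambda$), a field which always contains $L_{\mathfrak l}$. A Chebotarev argument should then show that for $\lambda$ in a set of density $1$ this inclusion is an equality --- equivalently, that the residue degree of $\lambda$ in $L$ equals the splitting degree of $G_\lambda$ --- whence $M_\lambda \subseteq L_{\mathfrak l}$ and $G_\lambda$ splits over $L_{\mathfrak l}$. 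I would then take $\Lambda'$ to be the intersection of the density $1$ sets produced in each step.

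\textbf{Main obstacle.} The hard point is the unramifiedness step. Carrying an unramified maximal torus --- which the Frobenius tori supply for free --- does \emph{not} force unramifiedness: anisotropic inner forms such as $\SL_1(D)$ have unramified maximal tori yet are not even quasi-split. One therefore has to use the compatible-system structure in an essential way to pin down the special fibre of $\mathcal{G}_\lambda$ uniformly in $\lambda$. The Chebotarev comparison --- for density $1$ many $\lambda$, between the completions of the individual splitting fields $L_\alpha$ and the completion of their intersection $L$ --- is a secondary but not completely routine point.
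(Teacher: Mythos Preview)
The paper does not supply its own proof of this proposition; it is quoted from \cite[Proposition 8.9]{LP92}. Your outline follows the architecture of Larsen--Pink's \S 8 --- integral models and reduction for unramifiedness, Frobenius tori and the global field $L$ for splitting --- and that is the correct shape. You are also right to flag the unramifiedness step as the crux; Larsen--Pink's actual argument there passes through their structure theory of the mod-$\ell$ images and is considerably more involved than a ``flatness-and-dimension count'', but since you explicitly defer to \cite[\S 8]{LP92} for that part I will not belabour it.

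There is, however, a genuine gap in your splitting step. You correctly deduce $M_\lambda \subseteq (L_\alpha)_{\mathfrak l_\alpha}$ for each valid $\alpha$, hence $M_\lambda = \bigcap_\alpha (L_\alpha)_{\mathfrak l_\alpha}$, and you correctly observe that $L_{\mathfrak l} = \bigl(\bigcap_\alpha L_\alpha\bigr)_{\mathfrak l}$ sits inside this local intersection. But the inclusion you \emph{need} is the opposite one, $M_\lambda \subseteq L_{\mathfrak l}$, and your ``Chebotarev argument'' does not supply it. Chebotarev controls how a \emph{fixed} global extension decomposes at varying $\lambda$, whereas $M_\lambda$ depends on $\lambda$ through the a priori unknown group $G_\lambda$ and has not been exhibited as the completion of any fixed number field. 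Put differently, you are asserting that the completion of a global intersection agrees with the intersection of the completions for density-$1$ many $\lambda$; this is false for general families of number fields and needs a specific reason here. What is missing is a global input tying the local fields $M_\lambda$ uniformly back to $L$ --- for instance, producing a single $\alpha$ (independent of $\lambda$) with $L_\alpha = L$, after which $M_\lambda \subseteq (L_\alpha)_{\mathfrak l_\alpha} = L_{\mathfrak l}$ follows at once for every $\lambda$ with $(\alpha,\lambda) \in \mathcal X$. You should either supply such an input or point precisely to where in \cite{LP92} it is established.
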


\begin{corollary} \label{independence_formal_character_split}
Let $\Lambda'$ be the subset of $\Lambda$ where $G_\lambda$ is unramified. There exist a $E$-split torus~$T_0$~and a faithful representation $\rho_0 : T_0 \ra \GL_{n, E}$ such that for all $\lambda \in \Lambda'$ there exists a finite place $\mathfrak{l}$ of $L$ lying above $\lambda$ such that $T_0 \times_{E} L_{\mathfrak{l}}$ is conjugate to a maximal torus in $G_\lambda \times_{E_\lambda} L_{\mathfrak{l}}$, and $\rho_0 \otimes_{E} L_{\mathfrak{l}}$ is equivalent to the formal character of $G_\lambda \times_{E_\lambda} L_{\mathfrak{l}} \ra \GL_{n, L_{\mathfrak{l}}}$.
\end{corollary}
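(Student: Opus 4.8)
The plan is to combine Proposition~\ref{independence_formal_character} with Proposition~\ref{unramified}, using the elementary fact that a split torus carrying a faithful $n$-dimensional representation over a field $K$ can be conjugated inside $\GL_{n,K}$, by an element of $\GL_n(K)$, onto a subtorus of the diagonal maximal torus $\mathbb{G}_m^n$, and that any subtorus of $\mathbb{G}_m^n$ (being cut out by equations $x_1^{a_1}\cdots x_n^{a_n}=1$ with $a_i\in\mathbb{Z}$) is defined over $\mathbb{Q}$. First I would fix a place $\lambda_0\in\Lambda'$ and a place $\mathfrak{l}_0$ of $L$ above it as in Proposition~\ref{unramified}, so that the connected reductive group $G_{\lambda_0}\times_{E_{\lambda_0}}L_{\mathfrak{l}_0}$ is split, and choose a split maximal torus $S_0\subset G_{\lambda_0}\times_{E_{\lambda_0}}L_{\mathfrak{l}_0}$. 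Diagonalising a basis of weight vectors for the inclusion $S_0\hookrightarrow\GL_{n,L_{\mathfrak{l}_0}}$ conjugates $S_0$ onto a subtorus $T_0\subset\mathbb{G}_m^n$; then $T_0$ is $\mathbb{Q}$-rational, hence $E$-split, and I take $\rho_0:T_0\to\GL_{n,E}$ to be the (faithful) inclusion $\iota_{T_0}$. By construction $T_0\times_E L_{\mathfrak{l}_0}$ is $\GL_n(L_{\mathfrak{l}_0})$-conjugate to the maximal torus $S_0$ of $G_{\lambda_0}\times_{E_{\lambda_0}}L_{\mathfrak{l}_0}$, and $\rho_0\otimes_E L_{\mathfrak{l}_0}$ is equivalent to the formal character of $G_{\lambda_0}\times_{E_{\lambda_0}}L_{\mathfrak{l}_0}\to\GL_{n,L_{\mathfrak{l}_0}}$, so the assertion holds for $\lambda_0$.

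It then remains to show that the same pair $(T_0,\rho_0)$ works for every $\lambda\in\Lambda'$. Fix such a $\lambda$ and a place $\mathfrak{l}\mid\lambda$ with $G_\lambda\times_{E_\lambda}L_{\mathfrak{l}}$ split; the construction above yields a $\mathbb{Q}$-rational subtorus $T_\lambda\subset\mathbb{G}_m^n$ which is $\GL_n(L_{\mathfrak{l}})$-conjugate, compatibly with the inclusions into $\GL_n$, to a split maximal torus $S_\lambda$ of $G_\lambda\times_{E_\lambda}L_{\mathfrak{l}}$. The crux is to produce an element $g\in\GL_n(\mathbb{Q})$ with $gT_0g^{-1}=T_\lambda$ that intertwines $\iota_{T_0}$ and $\iota_{T_\lambda}$. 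To this end I would first note that $\mathrm{ch}(T_0)\times_{\mathbb{Q}}L_{\mathfrak{l}_0}=\mathrm{ch}(S_0)=\mathrm{ch}(G_{\lambda_0}\times_{E_{\lambda_0}}L_{\mathfrak{l}_0})$; since $\mathrm{ch}(T_0)$ and $\mathrm{ch}(G_{\lambda_0})$ are both defined over $\mathbb{Q}$ and agree over $L_{\mathfrak{l}_0}$, they coincide as $\mathbb{Q}$-subvarieties of $\mathbb{G}_m\times\mathbb{A}^{n-1}$, and likewise $\mathrm{ch}(T_\lambda)=\mathrm{ch}(G_\lambda)$. Since each $G_\mu$ ($\mu\in\Lambda$) is connected, Proposition~\ref{independence_formal_character} gives that the formal character of $G_\mu\times_{E_\mu}\alg{E}_\mu$, hence also $\mathrm{ch}(G_\mu)$, is independent of $\mu$; therefore $\mathrm{ch}(T_0)=\mathrm{ch}(T_\lambda)$. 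Finally, as the image under $\mathrm{ch}$ of a connected reductive subgroup of $\GL_n$ determines its formal character up to isomorphism (\cite[\S 4]{LP92}; cf.\ Proposition~\ref{gamma_regular_tori}), applying this to the split tori $T_0,T_\lambda\subset\mathbb{G}_m^n$ over $\mathbb{Q}$ produces an isomorphism $\psi:T_0\isom T_\lambda$ matching the weight multiset of $\iota_{T_0}$ with that of $\iota_{T_\lambda}$, equivalently the desired $g\in\GL_n(\mathbb{Q})$ with $gT_0g^{-1}=T_\lambda$ and $g\,\iota_{T_0}(\cdot)\,g^{-1}=\iota_{T_\lambda}\circ\psi$.

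Composing the three conjugations $T_0\times_E L_{\mathfrak{l}}\ \sim\ T_\lambda\times_{\mathbb{Q}}L_{\mathfrak{l}}\ \sim\ S_\lambda$, all taking place over $L_{\mathfrak{l}}$, then shows that $T_0\times_E L_{\mathfrak{l}}$ is $\GL_n(L_{\mathfrak{l}})$-conjugate to the maximal torus $S_\lambda$ of $G_\lambda\times_{E_\lambda}L_{\mathfrak{l}}$ and that $\rho_0\otimes_E L_{\mathfrak{l}}$ is carried to the inclusion of $S_\lambda$, i.e.\ to the formal character of $G_\lambda\times_{E_\lambda}L_{\mathfrak{l}}\to\GL_{n,L_{\mathfrak{l}}}$; this is exactly the statement of the corollary. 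The step I expect to be the main obstacle is the ``crux'' above: upgrading the fact that $\mathrm{ch}$ determines the formal character to a genuine isomorphism of the torus-plus-representation datum over $\mathbb{Q}$ for split tori, and realising it by an element of $\GL_n(\mathbb{Q})$ intertwining the natural $n$-dimensional representations. This is a combinatorial statement about surjections $\mathbb{Z}^n\twoheadrightarrow X^*(T)$ and multisets of characters, and I would extract it from the analysis of the variety of characteristic polynomials in \cite[\S 4]{LP92}.
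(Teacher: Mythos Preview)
The paper does not supply a proof of this corollary; it is stated immediately after Proposition~\ref{unramified} and left to the reader as a direct consequence of Proposition~\ref{independence_formal_character}, Proposition~\ref{unramified}, and the discussion of the variety of characteristic polynomials in \S\ref{formal_character_variety_characteristic_polynomials}. Your argument is correct and spells out precisely the details the paper suppresses: diagonalising a split maximal torus of $G_{\lambda_0}\times_{E_{\lambda_0}}L_{\mathfrak{l}_0}$ into $\mathbb{G}_m^n$ to obtain a $\mathbb{Q}$-rational (hence $E$-split) subtorus $T_0$, and then using the $\lambda$-independence of $\mathrm{ch}(G_\lambda)$ from Proposition~\ref{independence_formal_character} together with the fact (from \cite[\S 4]{LP92}) that $\mathrm{ch}$ determines the formal character up to isomorphism to transport $(T_0,\rho_0)$ to every other $\lambda\in\Lambda'$. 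The ``crux'' you flag is indeed handled by the combinatorics in \cite[\S 4]{LP92}: two split subtori of $\mathbb{G}_m^n$ with the same image under $\mathrm{ch}$ are $\mathfrak{S}_n$-conjugate, which gives the required element of $\GL_n(\mathbb{Q})$. One minor comment: the paper's $\Lambda'$ is the set from Proposition~\ref{unramified} (where $G_\lambda$ is both unramified \emph{and} split over some $L_{\mathfrak{l}}$), so you may take the splitting over $L_{\mathfrak{l}}$ as given rather than deriving it.
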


\begin{theorem} [{part of \cite[Theorem 9.4]{LP92}}] \label{independence_root_datum}
Assume that each $\rho_\lambda$ is absolutely irreducible, and that $L = E$. Let $\Lambda'$ be the subset of $\Lambda$ where $G_\lambda$ is unramified. Then, for all $\lambda \in \Lambda'$ the root datum~$\Psi_{\lambda}$~of~$G_\lambda$~is independent of $\lambda$ up to isomorphism.
\end{theorem}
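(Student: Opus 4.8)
The plan is to reduce the statement, via the $\lambda$-independence results of \S\ref{formal_character_variety_characteristic_polynomials}--\S\ref{frobenius_tori}, to a purely group-theoretic rigidity property of split reductive groups, in the spirit of \cite[\S 9]{LP92}. Since $L = E$, for every $\lambda \in \Lambda'$ the only place $\mathfrak{l}$ of $L$ above $\lambda$ has $L_{\mathfrak{l}} = E_\lambda$, so Corollary \ref{independence_formal_character_split} furnishes a single $E$-split torus $T_0$ and a single faithful representation $\rho_0 : T_0 \ra \GL_{n, E}$ such that, after conjugating $G_\lambda$ inside $\GL_{n, E_\lambda}$, the torus $T_0 \times_E E_\lambda$ is a split maximal torus of $G_\lambda$ and the natural representation of $G_\lambda$ restricts to $\rho_0 \otimes_E E_\lambda$ on it. In particular the character lattice $X \eqdef X^*(T_0)$ and the weight multiset $\Omega \subseteq X$ of $\rho_0$ are independent of $\lambda$. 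Moreover, in this section $G_\lambda$ is connected reductive, it is split over $E_\lambda$ by Proposition \ref{unramified} (again using $L = E$), and its natural representation is absolutely irreducible by the Lemma in \S\ref{algebraic_monodromy_groups} (\cite[Lemma 6.9]{LP92}), since $\rho_\lambda$ is. Hence the root datum $\Psi_\lambda = (X, \Phi_\lambda, X^\vee, \Phi_\lambda^\vee)$ has its first and third entries fixed, and the whole point is to show that the finite set of roots $\Phi_\lambda$, and the coroots, do not depend on $\lambda$.

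The key step is the assertion that a connected split reductive group over a field of characteristic zero, together with a faithful \emph{absolutely irreducible} representation, is determined up to isomorphism by the formal character of that representation; equivalently, the pair $(X, \Omega)$ determines the root datum $\Psi_\lambda$. Granting this, the theorem follows at once, since all $G_\lambda$ with $\lambda \in \Lambda'$ share the formal character $(X, \Omega)$ and therefore have isomorphic root data. To establish the assertion one works over an algebraic closure and uses irreducibility in an essential way: $\Omega$ is then the weight multiset attached to a single highest weight $\mu$, and $\mu$, together with the Weyl group $W$, is recovered from the convex geometry of $\Omega$ (the extremal weights form the single orbit $W\mu$, and the simple roots and their coroots are read off from the edges of $\mathrm{conv}(\Omega)$ at $\mu$ and from the lengths of the weight strings in $\Omega$); applying $W$ produces $\Phi_\lambda$ and $\Phi_\lambda^\vee$, and the isomorphism theorem for split reductive groups recovers $G_\lambda$ from $\Psi_\lambda$. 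Saturation of $\Omega$, itself a consequence of irreducibility, is what makes this reconstruction unambiguous.

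I expect the main obstacle to be precisely this rigidity statement: a priori the same multiset $\Omega \subseteq X$ could be realized inside several non-isomorphic reductive groups, and excluding this is the delicate combinatorial analysis of \cite[\S 9]{LP92}. Absolute irreducibility of $\rho_\lambda$ is indispensable here -- for a reducible representation the formal character decomposes into unrelated orbit data and manifestly does not determine the group. By contrast the other ingredients are routine: the reduction above uses only Corollary \ref{independence_formal_character_split} and Proposition \ref{unramified}, and the passage from the rigidity statement to the theorem is immediate. Accordingly, in the write-up one would either cite \cite[Theorem 9.4]{LP92} directly, or reproduce its \S 9 argument after recording the reduction to the case of split $G_\lambda$ with fixed formal character $(X, \Omega)$.
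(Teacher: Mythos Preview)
The paper does not give its own proof of this statement: it is quoted directly as ``part of \cite[Theorem 9.4]{LP92}'' and used as a black box. Your proposal is therefore not being compared against an argument in the paper, but against the Larsen--Pink proof itself, and as a high-level outline of that proof your reduction is correct: via Corollary~\ref{independence_formal_character_split} and the hypothesis $L=E$, one fixes the split torus $T_0$ and the weight multiset $\Omega$ independently of $\lambda$, and the substance of the theorem is exactly the rigidity assertion that a connected split reductive group with a faithful absolutely irreducible representation is determined by its formal character.

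One caution on your sketch of the rigidity step: the heuristic that ``the simple roots and coroots are read off from the edges of $\mathrm{conv}(\Omega)$ at $\mu$ and from weight-string lengths'' is too optimistic as stated. For minuscule or quasi-minuscule representations, for instance, the weight polytope has many more edges at an extremal vertex than there are simple roots, and weight strings can be too short to pin down coroot pairings; conversely, distinct groups can share the same weight polytope. The actual argument in \cite[\S 9]{LP92} does not proceed by a uniform convex-geometric reconstruction but by a structured case analysis exploiting the classification of irreducible representations. You correctly flag this as the ``delicate combinatorial analysis'' and defer to \cite{LP92} for it, so your write-up is fine provided you treat the rigidity statement as a citation rather than as something your convex-hull sketch establishes.
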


\section{Independence in compatible systems of Lie-irreducible representations} \label{independence_absolutely_irreducible}

The following result holds.

\begin{proposition} \label{independence_theorem}
Let $(\Gamma, \set{F_\alpha}_{\alpha \in A})$ be a group with Frobenii, and let $\mathcal{R} = \set{\rho_{\lambda}}_{\lambda \in \Lambda}$ be a compatible system of rank $n$ absolutely Lie-irreducible\footnote{We say that a rank $n$ representation $\rho$ of $\Gamma$ with coefficients in a field $k$ of characteristic zero is \emph{absolutely Lie-irreducible} if $\rho \! \mid_{\Gamma'}$ is absolutely irreducible for any open subgroup $\Gamma'$ of $\Gamma$.} representations of $\Gamma$ with coefficients in a number field~$E$. Then, there exist a finite extension $L$ of $E$, a subset $\Lambda'$ of $\Lambda$ of residual Dirichlet density~$1$, and a split connected reductive algebraic subgroup $G$ of $\GL_{n, L}$ defined over $L$ such that for each $\lambda \in \Lambda'$ there exists a finite place $\mathfrak{l}$ of $L$ lying above $\lambda$ such that $G_{\lambda}^\circ \times_{E_{\lambda}} L_{\mathfrak{l}}$ is conjugate to $G \times_{L} L_{\mathfrak{l}}$ over $L_{\mathfrak{l}}$.
\end{proposition}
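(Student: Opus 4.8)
The plan is to reduce everything to the results of \cite{LP92} recalled in \S\ref{formal_character_variety_characteristic_polynomials}--\S\ref{frobenius_tori}, the only genuinely new input being a passage to a finite extension where the algebraic monodromy groups become connected and their root data stabilize. First I would dispose of the connectedness issue: by Proposition \ref{independence_connected_components} there is an open normal subgroup $\Gamma' \subset \Gamma$ with $\Gamma/\Gamma' \isom G_\lambda/G_\lambda^\circ$ for all $\lambda$, and as noted after that proposition, setting $A' = \set{\alpha \in A \, : \, F_\alpha \in \Gamma'}$ yields a group with Frobenii $(\Gamma', \set{F_\alpha}_{\alpha \in A'})$ and a compatible system $\mathcal{R}' = \set{\rho_\lambda \! \mid_{\Gamma'}}$ whose algebraic monodromy group is exactly $G_\lambda^\circ$. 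Since $\mathcal{R}$ is absolutely Lie-irreducible, each $\rho_\lambda \! \mid_{\Gamma'}$ is still absolutely irreducible (indeed absolutely Lie-irreducible), so by the Lemma after Definition \ref{compatible_systems_definition} each $G_\lambda^\circ$ is connected reductive with absolutely irreducible natural representation. Thus, replacing $\mathcal{R}$ by $\mathcal{R}'$, I may assume every $G_\lambda$ is connected and the hypotheses of \S\ref{frobenius_tori} hold.

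Next I would introduce the splitting field. Following \S\ref{frobenius_tori}, after discarding the non-$\Gamma$-regular $\alpha$ (legitimate by Proposition \ref{gamma_regular}), one obtains the Frobenius tori $T_\alpha$ over $E$ and the splitting field $L$ of the compatible system, a finite extension of $E$. I would then base-change the whole system to $L$: for each $\lambda$ choose a place $\mathfrak{l}$ of $L$ above $\lambda$ and consider $\rho_\lambda \otimes_{E_\lambda} L_{\mathfrak{l}}$, whose algebraic monodromy group is $G_\lambda \times_{E_\lambda} L_{\mathfrak{l}}$. This is again a compatible system over $L$ (the characteristic polynomials now have coefficients in $L$), it is still absolutely irreducible, and by construction its splitting field is $L$ itself. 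Now Proposition \ref{unramified} provides a subset $\Lambda' \subset \Lambda$ of residual Dirichlet density $1$ such that for $\lambda \in \Lambda'$ the group $G_\lambda$ is unramified and split over $L_{\mathfrak{l}}$ for a suitable $\mathfrak{l} \mid \lambda$.

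With connectedness, absolute irreducibility, and $L = E$ (after renaming $E := L$) in force, Theorem \ref{independence_root_datum} applies and shows that the root datum $\Psi_\lambda$ of $G_\lambda$ is independent of $\lambda \in \Lambda'$, up to isomorphism. Simultaneously, Corollary \ref{independence_formal_character_split} gives an $L$-split torus $T_0$ and a faithful $\rho_0 : T_0 \ra \GL_{n, L}$ such that $T_0 \times_L L_{\mathfrak{l}}$ is conjugate to a maximal torus in $G_\lambda \times_{E_\lambda} L_{\mathfrak{l}}$ with $\rho_0 \otimes_L L_{\mathfrak{l}}$ equivalent to the formal character. Over the field $L$, a split connected reductive group is determined up to isomorphism by its root datum, and together with the embedding data carried by $\rho_0$ on the maximal torus $T_0$ one recovers the conjugacy class of the pair $(G_\lambda, T_0 \hookrightarrow G_\lambda)$ inside $\GL_{n, L}$: concretely, fix any one $\lambda_0 \in \Lambda'$ and let $G$ be a split model over $L$ of $G_{\lambda_0}$, realized inside $\GL_{n, L}$ so that $T_0 \subset G$ and the natural representation restricts to $\rho_0$ on $T_0$. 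Then for every $\lambda \in \Lambda'$, since $G_\lambda \times L_{\mathfrak{l}}$ and $G \times L_{\mathfrak{l}}$ are split connected reductive subgroups of $\GL_{n, L_{\mathfrak{l}}}$ with the same root datum and the same formal character (both equivalent to $\rho_0 \otimes L_{\mathfrak{l}}$), they are conjugate over $L_{\mathfrak{l}}$ by $\GL_n(L_{\mathfrak{l}})$; this yields the asserted conjugacy of $G_\lambda^\circ \times_{E_\lambda} L_{\mathfrak{l}}$ with $G \times_L L_{\mathfrak{l}}$.

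The main obstacle I expect is the final rigidity step: upgrading ``same root datum and same formal character'' to ``$\GL_n(L_{\mathfrak{l}})$-conjugate as subgroups of $\GL_{n, L_{\mathfrak{l}}}$.'' Abstractly, isomorphic root data give isomorphic reductive groups, and matching formal characters pin down the natural $n$-dimensional representation up to equivalence; but one must check these two isomorphisms can be chosen compatibly — i.e. that the isomorphism $G_\lambda \times L_{\mathfrak{l}} \isom G \times L_{\mathfrak{l}}$ can be taken to intertwine the natural representations — and that the resulting abstract equivalence of $n$-dimensional representations is realized by conjugation in $\GL_n(L_{\mathfrak{l}})$, which is where absolute irreducibility (via Schur) is essential. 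Keeping track of which place $\mathfrak{l} \mid \lambda$ is used, and ensuring the choices are coherent as $\lambda$ varies, is the bookkeeping one has to be careful about; the substantive mathematics is all in \cite{LP92}.
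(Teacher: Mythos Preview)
Your proposal is correct and follows essentially the same route as the paper: reduce to connected monodromy via Proposition \ref{independence_connected_components} and Lie-irreducibility, pass to the splitting field $L$, apply Proposition \ref{unramified} and Theorem \ref{independence_root_datum} to the base-changed system, and conclude. The paper dispatches your ``main obstacle'' in one stroke by citing \cite[Theorem A.4.6]{CGP15} for the existence of a split connected reductive group $G$ over $L$ with the common root datum $\Psi$, the conjugacy of $G \times_L L_{\mathfrak{l}}$ with $G_{\mathfrak{l}}$ inside $\GL_{n, L_{\mathfrak{l}}}$ then being immediate (same root datum forces an isomorphism of split groups, the formal character pins down the natural representation, and absolute irreducibility plus Schur upgrades equivalence to conjugation); your more explicit unpacking of this step via a fixed $\lambda_0$ and Corollary \ref{independence_formal_character_split} is a valid alternative, and your caution about it is well placed but not an actual gap.
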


\begin{proof}
Let $\Gamma'$ be the open subgroup of $\Gamma$ given by Proposition \ref{independence_connected_components}. As each~$\rho_\lambda$~is absolutely Lie-irreducible, the restrictions $\rho_\lambda \! \mid_{\Gamma'}$ are still absolutely irreducible. Up to restrict to $\Gamma'$, we can assume each $G_\lambda$ to be connected. Let~$L$~be the splitting field of $\mathcal{R}$, and let $\Lambda' \subset \Lambda$ be as in Proposition \ref{unramified}. Let $\mathfrak{L}'$ be the set of finite places of $L$ such that for every $\lambda \in \Lambda'$ there exists $\mathfrak{l} \in \mathfrak{L}'$ lying above $\lambda$ such that~$G_\lambda$~is split over $L_{\mathfrak{l}}$. This set has residual Dirichlet density $1$. Let $\rho_{\mathfrak{l}} = \rho_\lambda \otimes_{E_\lambda} L_{\mathfrak{l}}$, and consider the compatible system $\set{\rho_{\mathfrak{l}}}_{\mathfrak{l} \in \mathfrak{L}'}$. Clearly, the algebraic monodromy group $G_{\mathfrak{l}}$ of $\rho_{\mathfrak{l}}$ is equal to $G_\lambda \times_{E_\lambda} L_{\mathfrak{l}}$ by construction. By Theorem \ref{independence_root_datum}, the root datum $\Psi_{\mathfrak{l}}$ of $G_{\mathfrak{l}}$ is independent of~$\mathfrak{l}$~up to isomorphism. Fix such a root datum $\Psi$. By \cite[Theorem A.4.6]{CGP15}, there exists a (unique up to isomorphism) split connected reductive group $G$ over $L$ with root datum $\Psi$, and such that $G \times_L L_{\mathfrak{l}}$ is conjugate to $G_{\mathfrak{l}}$ over $L_{\mathfrak{l}}$ for any $\mathfrak{l} \in \mathfrak{L}$.
\end{proof}

Let $(\Gamma, \set{F_\alpha}_{\alpha \in A})$ be a group with Frobenii, and let $\mathcal{R} = \set{\rho_{\lambda}}_{\lambda \in \Lambda}$ be a compatible system of rank $n$ representations of $\Gamma$ with coefficients in a number field~$E$. We introduce the following definition.

\begin{definition} \label{Lie-irreducible_decomposition_definition}
We say that $\mathcal{R}$ has a \emph{Lie-irreducible decomposition} over $E$ if there exist sets $\Lambda_i$ of places of $E$ of residual Dirichlet density $1$, open subgroups $\Gamma_i$ of $\Gamma$, compatible systems $\mathcal{S}_i = \set{ \sigma_{i, \lambda}}_{\lambda \in \Lambda_i}$ of rank $m_i$ absolutely Lie-irreducible representations of $\Gamma_i$ with coefficients in $E$, and rank $d_i$ Artin representations\footnote{In this setting, we say that a continuous representation $\omega : \Gamma \ra \GL_{d} (\alg{E}_\lambda)$ is \emph{Artin} if it factors through a finite quotient of $\Gamma$. Any Artin representation $\omega$ can be actually realised over $\alg{\Q}$, so that $\lambda$-independence questions become trivial in this context.} $\omega_i$ of $\Gamma_i$, for $i = 1, \ldots, k$, where $\sum_{i = 1}^k m_i d_i [\Gamma : \Gamma_i] = n$, such that
\[ \rho_\lambda \cong \oplus_{i = 1}^k \Ind_{\Gamma_i}^{\Gamma} (\sigma_{i, \lambda} \otimes \omega_i)\]
over $E$ for all $\lambda \in \Lambda \cap ( \cap_{i=1}^k \Lambda_i)$. In this case, we write
\[ \mathcal{R} \cong \oplus_{i = 1}^k \Ind_{\Gamma_i}^{\Gamma} (\mathcal{S}_i \otimes \omega_i). \]
\end{definition}

\begin{remark} \label{Lie-irreducible_decomposition_remark}
For some choices of the group $\Gamma$, one can prove that any $\lambda$-adic representation $\rho : \Gamma \ra \GL_n (\alg{E}_\lambda)$ has a Lie-irreducible decomposition $\rho \cong \oplus_{i = 1}^k \mathrm{Ind}_{\Gamma_i}^\Gamma (\sigma_i \otimes \omega_i)$, where $\Gamma_i$ is an open subgroup of $\Gamma$, $\sigma_i : \Gamma_i \ra \GL_{m_i} (\alg{E}_\lambda)$ is Lie-irreducible, and $\omega_i : \Gamma_i \ra \GL_{d_i} (\alg{E}_\lambda)$ is Artin, for $i = 1, \ldots, k$. This has been proved by Katz, see \cite[Proposition 1]{katz87}, when $\Gamma$ is the \'etale fundamental group of a smooth connected affine curve over an algebraically closed field of positive characteristic, and by Patrikis, see \cite[Proposition 3.4.1]{pat12}, by adapting Katz's argument, when $\Gamma$ is the absolute Galois group of a number field. To prove a decomposition result for a compatible system $\mathcal{R} = \set{\rho_{\lambda}}_{\lambda \in \Lambda}$ of representations of $\Gamma$, one would then start from a single $\lambda$-adic representation $\rho_\lambda$ in $\mathcal{R}$, decompose it as $\rho_\lambda \cong \oplus_{i = 1}^k \Ind_{\Gamma_i}^{\Gamma} (\sigma_{i, \lambda} \otimes \omega_i)$, and extend each $\sigma_{i, \lambda}$ to a compatible system $\mathcal{S}_i$ of Lie-irreducible representations. The isomorphism $\mathcal{R} \cong \oplus_{i = 1}^k \Ind_{\Gamma_i}^{\Gamma} (\mathcal{S}_i \otimes \omega_i)$ would then follow by considerations on the traces of Frobenius elements, see the proofs of Theorem \ref{decomposition_theorem_automorphic} and Theorem \ref{decomposition_theorem_curves}. However, extending a $\lambda$-adic representation to a compatible system is a highly non trivial problem, and can be achieved, for instance, under suitable assumptions in the number field case by means of potential automorphy techniques, see \S \ref{independence_geometric_compatible_systems}, and in the positive characteristic case by means of the global Langlands correspondence for $\GL_n$ of L. Lafforgue, see \S \ref{independence_compatible_systems_lisse_sheaves}.
\end{remark}

Proposition \ref{independence_theorem} has the following consequence.

\begin{corollary} \label{independence_Lie-irreducible_decomposition}
 Let $(\Gamma, \set{F_\alpha}_{\alpha \in A})$ be a group with Frobenii, and let $\mathcal{R} = \set{\rho_{\lambda}}_{\lambda \in \Lambda}$ be a compatible system of rank $n$ representations of $\Gamma$ with coefficients in a number field~$E$. Assume that $\mathcal{R}$ has a Lie-irreducible decomposition over a finite extension $E'$ of $E$. Then, there exist a finite extension $L$ of $E'$, a subset $\Lambda'$ of $\Lambda$ of residual Dirichlet density~$1$, and a connected reductive algebraic subgroup $G$ of $\GL_{n, L}$ defined over $L$ such that for each $\lambda \in \Lambda'$ there exists a finite place $\mathfrak{l}$ of $L$ lying above $\lambda$ such that $G_{\lambda}^\circ \times_{E_{\lambda}} L_{\mathfrak{l}}$ is conjugate to $G \times_{L} L_{\mathfrak{l}}$ over $L_{\mathfrak{l}}$.
\end{corollary}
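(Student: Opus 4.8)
The plan is to reduce Corollary \ref{independence_Lie-irreducible_decomposition} to Proposition \ref{independence_theorem} by decomposing $\mathcal{R}$ according to its Lie-irreducible decomposition and applying the Lie-irreducible case to each summand separately. First I would replace $E$ by $E'$, which is harmless since enlarging the field of coefficients only passes to places above the original ones and does not affect the residual Dirichlet density-$1$ condition. So write $\mathcal{R} \cong \oplus_{i=1}^k \Ind_{\Gamma_i}^\Gamma (\mathcal{S}_i \otimes \omega_i)$ over $E'$ with each $\mathcal{S}_i = \set{\sigma_{i,\lambda}}_{\lambda \in \Lambda_i}$ a compatible system of rank $m_i$ absolutely Lie-irreducible representations of the open subgroup $\Gamma_i$, and each $\omega_i$ a rank $d_i$ Artin representation of $\Gamma_i$.

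Next I would apply Proposition \ref{independence_theorem} to each $\mathcal{S}_i$, viewed as a compatible system of representations of the group with Frobenii $(\Gamma_i, \set{F_\alpha}_{\alpha \in A_i})$ where $A_i = \set{\alpha \in A : F_\alpha \in \Gamma_i}$. This produces, for each $i$, a finite extension $L_i$ of $E'$, a subset $\Lambda_i' \subset \Lambda_i$ of residual Dirichlet density $1$, and a split connected reductive subgroup $G^{(i)}$ of $\GL_{m_i, L_i}$ such that the neutral component of the algebraic monodromy group of $\sigma_{i,\lambda}$ becomes conjugate to $G^{(i)} \times_{L_i} L_{i,\mathfrak{l}}$ over a suitable place. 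I would then take $L$ to be the compositum of all the $L_i$ together with a finite extension large enough to realise each Artin representation $\omega_i$ (which, as the paper notes, can be defined over $\alg{\Q}$, hence over a number field), and take $\Lambda' = \Lambda \cap \bigcap_{i=1}^k \Lambda_i'$, still of residual Dirichlet density $1$. Over $L$ the Artin twists $\omega_i$ contribute nothing to the neutral component, and induction from an open subgroup and direct sum are operations on algebraic groups that commute with base change; so I would build the global group $G$ as the image in $\GL_{n,L}$ of $\oplus_{i=1}^k \Ind_{\Gamma_i}^\Gamma (\widetilde{G^{(i)}} \otimes \omega_i)$ under the natural faithful representation, where $\widetilde{G^{(i)}}$ denotes a suitable model. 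The point is that over a place $\mathfrak{l}$ of $L$ lying above $\lambda \in \Lambda'$, the representation $\rho_\lambda \otimes L_{\mathfrak{l}}$ is conjugate to $\oplus_i \Ind (\sigma_{i,\lambda} \otimes \omega_i) \otimes L_{\mathfrak{l}}$, and the Zariski closure of the image of this, intersected with its neutral component, is controlled by the neutral components of the $G^{(i)}$, which are now all $\lambda$-independent.

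The main obstacle is the passage from the $\lambda$-independence of the individual neutral components $G_{\mathcal{S}_i, \lambda}^\circ$ to the $\lambda$-independence of $G_\lambda^\circ$ for the induced direct sum: one must check that $G_\lambda^\circ$ is literally recovered (up to conjugacy over $L_{\mathfrak{l}}$) as a fibre product / induced construction from the pieces, uniformly in $\lambda$. Here the key technical facts to nail down are: (i) passing to the open subgroup $\Gamma' = \bigcap_i \Gamma_i^{g}$ (intersection over coset representatives) does not change any $G_\lambda^\circ$, by Proposition \ref{independence_connected_components}; (ii) on $\Gamma'$ the induced representation restricts to an explicit direct sum of conjugates of the $\sigma_{i,\lambda} \otimes \omega_i$, so that $G_\lambda^\circ$ embeds as a subgroup of a product of copies of the $G^{(i),\circ}$ cut out by the matching of characteristic polynomials on Frobenii; and (iii) this cutting-out is itself $\lambda$-independent because the relevant Frobenius characteristic polynomials are, by compatibility, independent of $\lambda$ on a dense set, so the resulting reductive subgroup of the product has a $\lambda$-independent root datum, and one invokes \cite[Theorem A.4.6]{CGP15} as in the proof of Proposition \ref{independence_theorem} to descend it to a single split group $G$ over $L$. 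Everything else — the behaviour of the Artin twists, the density-$1$ bookkeeping, the compositum of fields — is routine.
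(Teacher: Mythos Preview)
Your approach is essentially the same as the paper's: apply Proposition~\ref{independence_theorem} to each Lie-irreducible summand $\mathcal{S}_i$, realise the Artin representations $\omega_i$ over a number field, take the compositum $L$ of the resulting extensions, and intersect the density-$1$ sets of places. The paper is in fact terser than you on the combining step: it simply asserts that ``the algebraic monodromy group $G_\lambda$ of $\rho_\lambda$ only depends on the algebraic monodromy groups of $\sigma_{i,\lambda}$ and $\omega_i$'' and then concludes, without spelling out how $G_\lambda^\circ$ is reconstructed from the pieces.

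You rightly flag this passage as the main obstacle, and your outline (i)--(iii) is a reasonable way to unpack it. One point to sharpen: in (iii) you invoke the root-datum independence of Theorem~\ref{independence_root_datum}, but that result is stated under the hypothesis that each $\rho_\lambda$ is absolutely irreducible, which the direct sum $\oplus_i \Ind(\sigma_{i,\lambda}\otimes\omega_i)$ is not. A cleaner way to finish, once you have restricted to a small enough open normal subgroup as in (i)--(ii), is to observe that $G_\lambda^\circ$ is the image of the diagonal map into the product of the (now $\lambda$-independent) groups $H_{\sigma_i,\lambda}^\circ$, and that this image is determined, via a Goursat-type analysis, by the pattern of isomorphisms among the irreducible summands; that pattern is $\lambda$-independent because it is detected by equalities of Frobenius characteristic polynomials.
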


\begin{proof}
By assumption, there exist a finite extension $E'$ of $E$, sets $\Lambda_i$ of places of $E'$ of residual Dirichlet density $1$, open subgroups $\Gamma_i$ of $\Gamma$, compatible systems $\mathcal{S}_i = \set{ \sigma_{i, \lambda}}_{\lambda \in \Lambda_i}$ of rank $m_i$ absolutely Lie-irreducible representations of $\Gamma_i$ with coefficients in $E'$, and rank $d_i$ Artin representations $\omega_i$ of $\Gamma_i$, for $i = 1, \ldots, k$, where $\sum_{i = 1}^k m_i d_i [\Gamma : \Gamma_i] = n$, such that
\[ \rho_\lambda \cong \oplus_{i = 1}^k \Ind_{\Gamma_i}^{\Gamma} (\sigma_{i, \lambda} \otimes \omega_i) \]
over $E'$ for all $\lambda \in \Lambda \cap ( \cap_{i=1}^k \Lambda_i)$, where we still denote by $\Lambda$ a set of places of $E'$ of residual Dirichlet density $1$ lying above the places in $\Lambda$, with a slight abuse of notations. For any $\lambda \in \Lambda \cap (\cap_{i = 1}^k \Lambda_i)$, the algebraic monodromy group $G_\lambda$ of $\rho_\lambda$ only depends on the algebraic monodromy groups of $\sigma_{i, \lambda}$ and $\omega_i$, for $i = 1, \ldots, k$.

For any $i$ and any $\lambda \in \Lambda_i$, denote by $H_{\sigma_{i, \lambda}}$ the algebraic monodromy group of $\sigma_{i, \lambda}$. By Proposition \ref{independence_theorem}, for any $i$ there exist a finite extension $L_i$ of $E'$, a subset $\Lambda'_i$ of $\Lambda_i$ of residual Dirichlet density~$1$, and a connected reductive algebraic subgroup $H_i$ of $\GL_{m_i, L_i}$ defined over $L_i$ such that for each $\lambda \in \Lambda'_i$ there exists a finite place $\mathfrak{l}$ of $L_i$ lying above $\lambda$ such that $H_{\sigma_{i, \lambda}}^\circ \times_{E'_{\lambda}} L_{\mathfrak{l}}$ is conjugate to $H_i \times_{L_i} L_{i, \mathfrak{l}}$ over $L_{i, \mathfrak{l}}$.

Since for any $i$ the representation $\omega_i$ is Artin, it can be defined over $\alg{\Q}$. Its algebraic monodromy group $H_{\omega_i}$ is then a finite subgroup of $\GL_{n, \alg{\Q}}$, which can be defined over a number field $L_{\omega_i}$. Let us enlarge $L_i$ so that it contains $L_{\omega_i}$.

The conclusion follows by taking $L$ to be the composite of the extensions $L_i$, and $\Lambda'$ to be the set of places in $\cap_{i = 1}^k \Lambda_i'$ lying above the places in $\Lambda$.
\end{proof}

\section{Geometric compatible systems of Galois representations} \label{independence_geometric_compatible_systems}

We see now an application of the abstract results of \S \ref{independence_absolutely_irreducible} to the case of geometric compatible systems of Galois representations attached to automorphic representations. In this context, we refer to \cite{BLGGT14} for the background terminology.

Let $F$ be a a number field, let $\Gamma_F$ denote the absolute Galois group of $F$, and let $S$ be a finite set of places of $F$. Let~$E$~be a number field, and let $\Lambda$ be a set of finite places of $E$ of residual Dirichlet density $1$. Let $n \geq 1$ be an integer. We recall the following definition.

\begin{definition} \label{geometric_compatible_systems_definition}
A \emph{geometric\footnote{We add ``geometric'' to the terminology of \cite[\S 5.1]{BLGGT14} in order to distinguish these compatible systems from the ``abstract'' compatible systems of Definition \ref{compatible_systems_definition}.} compatible system} of rank $n$ representations of $\Gamma_F$ defined over $E$ and unramified outside $S$ is a family $\mathcal{R} = \set{\rho_{\lambda}}_{\lambda \in \Lambda}$ of continuous semisimple representations
\[ \rho_{\lambda} : \Gamma_F \ra \GL_n (\alg{E}_{\lambda}) \]
such that:
\begin{enumerate}
\item[$(1)$] If $v \notin S$ is a finite place of $F$, then for all $\lambda$ not dividing the residue characteristic of $v$, the representation $\rho_\lambda$ is unramified at $v$, and the characteristic polynomial of $\rho_\lambda (\Frob_v)$ has coefficients in $E$ and is independent of $\lambda$.
\item[$(2)$] Each representation $\rho_\lambda$ is de Rham at all places above the residue characteristic of $\lambda$, and in fact crystalline at any place $v \notin S$ above the residue characteristic of~$\lambda$.
\item[$(3)$] For each embedding $\tau : F \ra \alg{E}$ the $\tau$-Hodge-Tate weights of $\rho_\lambda$ are independent of $\lambda$.
\end{enumerate}
\end{definition}

Clearly, any geometric compatible system $\mathcal{R}$ is a compatible system in the sense of Definition \ref{compatible_systems_definition}, with $\mathcal{X} = \set{(v, \lambda) \in |F| \times \Lambda \, : \, v \notin S, \lambda \nmid \chr (k(v))}$. All the definitions introduced in the case of general compatible systems, as well as all the results proved, are therefore valid also for geometric compatible systems of Galois representations.

Let $F$ be a CM field, let $F^+$ be the maximal totally real subfield of $F$, and let $c$ be a generator of $\Gal (F / F^+)$. Thanks to the work of many people, e.g. \cite{clozel90}, \cite{kottwitz92}, \cite{HT01}, \cite{shin11}, \cite{CH13}, geometric compatible systems can be attached to regular algebraic, conjugate self-dual, cuspidal automorphic representations of $\GL_n (\mathbb{A}_F)$. We recall here the general statement of this construction\footnote{To simplify the exposition, in this paper we decided to restrict to conjugate self-dual automorphic representations, i.e. automorphic representations $\pi$ such that the contragradient $\pi^\vee$ of $\pi$ satisfies $\pi^{\vee} \cong \pi \circ c$. An analogous construction holds in the more general case of polarised automorphic representations, see \cite[\S 2.1]{BLGGT14}. Also, an analogous construction holds when $F$ is a totally real field.}, which also involves results of \cite{TY07}, \cite{caraiani12} and \cite{caraiani14}. We have:

\begin{theorem} \label{construction_automorphic_compatible_systems_theorem}
Let $\pi$ be a regular algebraic, conjugate self-dual, cuspidal automorphic representation of $\GL_n (\mathbb{A}_F)$, unramified outside a finite set $S$ of places of $F$. Then, there exist a number field $E_\pi$, a compatible system $\mathcal{R}_\pi = \set{\rho_{\pi, \lambda}}_{\lambda \in |E_\pi|}$ of rank $n$ semisimple representations
\[ \rho_{\pi, \lambda} : \Gamma_F \ra \GL_n (E_{\pi, \lambda})\]
with coefficients in $E_\pi$, and an integer $w$ such that:
\begin{enumerate}
\item[$(1)$] $\rho_{\pi, \lambda}$ is totally odd, conjugate self-dual\footnote{With the terminology of \cite[\S 2.1]{BLGGT14}, this means that the pair $(\rho_{\pi, \lambda}, \epsilon^{1-n})$, where $\epsilon$ denotes the cyclotomic character of $\Gamma_{F^+}$, is a totally odd, polarised representation of $\Gamma_F$.}.
\item[$(2)$] If $v$ is a finite place of $F$ not dividing the residue characteristic of $\lambda$, then, given an isomorphism $\imath : \alg{E}_{\pi, \lambda} \isom \C$, we have the local-global compatibility
\[ \imath \mathrm{WD} (\rho_{\pi, \lambda} \! \mid_{\Gamma_{F_v}})^{\mathrm{F-ss}} \cong \mathrm{rec}_v (\pi_v \otimes |\det|_v^{(1-n)/2}),\]
and these Weil-Deligne representations are pure of weight $w$.
\item[$(3)$] Each representation $\rho_{\pi, \lambda}$ is de Rham at all places above the residue characteristic of $\lambda$, and for each embedding $\tau : F \ra \alg{E}_\pi$ the $\tau$-Hodge-Tate weights of $\rho_{\pi, \lambda}$ have multiplicity at most one, and are given by
\[ \mathrm{HT}_\tau (\rho_{\pi, \lambda}) = \set{a_{\tau, 1} + n-1, a_{\tau, 2} + n-2, \ldots, a_{\tau, n}}, \]
where $a = (a_{\tau, i})$ is the weight of $\pi$. Moreover
\[ \mathrm{HT}_{\tau \circ c} (\rho_{\pi, \lambda}) = \set{w - h \, : \, h \in \mathrm{HT}_\tau (\rho_{\pi, \lambda})}.\]
\item[$(4)$] If $v$ is a place of $F$ dividing the residue characteristic of $\lambda$, then, given an isomorphism $\imath : \alg{E}_{\pi, \lambda} \isom \C$, we have the local-global compatibility
\[ \imath \mathrm{WD} (\rho_{\pi, \lambda} \! \mid_{\Gamma_{F_v}})^{\mathrm{F-ss}} \cong \mathrm{rec}_v (\pi_v \otimes |\det|_v^{(1-n)/2}).\]
In particular, $\rho_{\pi, \lambda}$ is semi-stable at $v$, and if $v \notin S$ then it is crystalline.
\end{enumerate}
\end{theorem}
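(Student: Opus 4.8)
Since this statement merely compiles the construction of automorphic Galois representations, the plan is to assemble it from the literature rather than to prove it \emph{ab initio}. First I would recall that, $\pi$ being conjugate self-dual and cuspidal, it can be realised (after transfer to a suitable unitary similitude group over $F^+$) in the cohomology of Kottwitz--Harris--Taylor type Shimura varieties; following \cite{clozel90}, \cite{kottwitz92}, \cite{HT01}, and --- to remove the hypothesis that $\pi$ be square-integrable at some finite place --- \cite{shin11} and \cite{CH13}, one obtains for each prime $\ell$ and each isomorphism $\imath : \alg{\Q}_\ell \isom \C$ a continuous semisimple representation $\Gamma_F \ra \GL_n(\alg{\Q}_\ell)$. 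That these descend to representations with coefficients in a number field $E_\pi$ and fit together into a compatible system $\set{\rho_{\pi, \lambda}}_{\lambda \in |E_\pi|}$ follows from unramified local-global compatibility: the characteristic polynomial of $\rho_{\pi, \lambda}(\Frob_v)$ at a place $v$ where $\pi$ is unramified is the polynomial attached to the (suitably twisted) Satake parameters of $\pi_v$, hence lies in a fixed number field and is independent of $\lambda$.

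For part $(1)$, the identity $\pi^\vee \cong \pi \circ c$ together with cuspidality yields a polarization $\rho_{\pi, \lambda}^\vee \cong \rho_{\pi, \lambda} \circ c \otimes \epsilon^{n-1}$, and the total oddness of the associated sign is read off from the action of complex conjugation on the Shimura datum, equivalently from the Hodge--Tate weight combinatorics as in \cite[\S 2.1]{BLGGT14}. For part $(3)$, the Hodge--Tate weights are governed by the infinitesimal character of $\pi_\infty$: regularity and algebraicity identify this character with the weight $a = (a_{\tau, i})$ of $\pi$, and the shift by $(n-1, n-2, \ldots, 0)$ is precisely the combined effect of the half-sum of positive roots and the twist by $|\det|_v^{(1-n)/2}$; the relation between $\mathrm{HT}_{\tau \circ c}(\rho_{\pi, \lambda})$ and $\mathrm{HT}_\tau(\rho_{\pi, \lambda})$ is then forced by the polarization and the purity weight $w$.

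The substantive content, and the part that was historically the hardest, is the full local-global compatibility at finite places together with purity, i.e. parts $(2)$ and $(4)$. At a place $v \nmid \ell$, \cite{HT01} gives the compatibility up to Frobenius semisimplification and up to the monodromy operator $N$; the determination of $N$ and the purity of the Weil--Deligne representations $\mathrm{WD}(\rho_{\pi, \lambda} \!\mid_{\Gamma_{F_v}})^{\mathrm{F-ss}}$ are supplied by \cite{TY07} and completed in general by \cite{caraiani12}, via the analysis of the nearby cycles and the weight spectral sequence on the relevant Shimura varieties. At a place $v \mid \ell$, the de Rham and (outside $S$) crystalline properties come from the $p$-adic comparison theorems applied to \'etale cohomology, and the local-global compatibility of part $(4)$ is the $\ell = p$ compatibility theorem of \cite{caraiani14} and \cite{BLGGT14}. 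The only genuine work is then bookkeeping: checking that the normalisations across these sources --- the twist by $|\det|_v^{(1-n)/2}$, the geometric Frobenius convention, and the sign convention for Hodge--Tate weights --- are mutually consistent, so that the four assertions can be stated simultaneously for a single compatible system $\mathcal{R}_\pi$. I expect this consistency check, rather than any one input, to be the main obstacle.
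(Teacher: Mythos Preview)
Your proposal is correct and matches the paper's treatment: the paper does not prove this theorem at all but simply states it as a known result assembled from the literature, citing exactly the references you invoke (\cite{clozel90}, \cite{kottwitz92}, \cite{HT01}, \cite{shin11}, \cite{CH13} for the construction, and \cite{TY07}, \cite{caraiani12}, \cite{caraiani14} for local-global compatibility and purity). Your sketch of which ingredient supplies which part is accurate and in fact more detailed than what the paper provides.
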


The compatible system $\mathcal{R}_\pi$ is then a geometric compatible system of rank $n$ representations of $\Gamma_F$ with coefficients in $E_\pi$ and unramified outside $S$, in the sense of Definition \ref{geometric_compatible_systems_definition}. Moreover, it is totally odd, conjugate self-dual, i.e. it is formed of totally odd, conjugate self-dual representations, and it is strictly pure of weight $w$, and regular.

The global Langlands conjectures in this case predict all the representations $\rho_{\pi, \lambda}$ to be absolutely irreducible. For the purposes of this paper, we assume the following weaker conjecture.

\begin{conjecture} \label{conjecture_irreducibility_automorphic_compatible_systems}
Let $\pi$ be a regular algebraic, conjugate self-dual, cuspidal automorphic representation of $\GL_n (\mathbb{A}_F)$. Then, there exists a set of rational primes $\mathcal{L}$ of Dirichlet density $1$ such that for all $\ell \in \mathcal{L}$ and $\lambda \mid \ell$ the representation
\[\rho_{\pi, \lambda} : \Gamma_F \ra \GL_n (E_{\pi, \lambda})\]
is absolutely irreducible.
\end{conjecture}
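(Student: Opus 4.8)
The statement is one of the standard expected properties of automorphic Galois representations: it is known for small $n$, and more generally under extra local or regularity hypotheses on $\pi$, and is assumed here rather than proved. I describe the strategy one would follow to establish it, and the point at which it currently breaks down. The overall plan is to reduce the conjecture to a constraint on the possible reducible decompositions of $\rho_{\pi, \lambda}$, and then to rule out the non-trivial ones using potential automorphy.

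First, each $\rho_{\pi, \lambda}$ is semisimple by hypothesis, so suppose $\rho_{\pi, \lambda} \cong \oplus_{j = 1}^{r} r_{j, \lambda}$ with the $r_{j, \lambda}$ irreducible. By Theorem~\ref{construction_automorphic_compatible_systems_theorem}, $\rho_{\pi, \lambda}$ is de Rham with pairwise distinct $\tau$-Hodge--Tate weights for every $\tau$, and, by local-global compatibility at the finite places $v \nmid \ell$ (see \cite{TY07}, \cite{shin11}, \cite{caraiani12}, \cite{caraiani14}), the associated Weil--Deligne representations are pure of weight $w$. Hence each constituent $r_{j, \lambda}$ is de Rham, with Hodge--Tate weights a sub-multiset of those of $\rho_{\pi, \lambda}$ (so again regular), and is itself pure of weight $w$. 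The conjugate self-duality of $\rho_{\pi, \lambda}$ means that $\rho_{\pi, \lambda}^{\vee}$ equals, up to a fixed power of the cyclotomic character, the conjugate $\rho_{\pi, \lambda} \circ c$; this induces an involution on the set $\set{r_{1, \lambda}, \ldots, r_{r, \lambda}}$. Next I would apply potential automorphy theorems in the regular, polarised, CM setting --- via the Harris tensor trick and Moret-Bailly, as in \cite{BLGGT14} --- to show that, after a finite CM base change and for $\lambda$ outside a density-zero set, each $r_{j, \lambda}$ is cut out by a cuspidal automorphic representation; the constituents then assemble into an isobaric sum with the same $L$-function as $\rho_{\pi, \lambda}$, and strong multiplicity one, together with the Hodge--Tate weight bookkeeping and the duality involution, is meant to force $r = 1$, i.e. irreducibility. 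Finally, to obtain the precise shape of the conjecture one upgrades from a density-one set of places $\lambda$ of $E_\pi$ to a density-one set of rational primes $\ell$ with \emph{all} $\lambda \mid \ell$ good: if $\rho_{\pi, \lambda_0}$ is reducible, then the characteristic polynomials $\det(1 - \rho_{\pi, \lambda_0}(\Frob_v)\, t)$ factor over $E_\pi$ in a way that, by compatibility, is visible at every $\lambda$, and one combines this with the $\lambda$-independence of the formal character and of the group of connected components recalled in \S~\ref{frobenius_tori} to confine genuine reducibility to a set of $\lambda$, hence of $\ell$, of density zero.

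The hard part is the step that excludes a non-trivial self-dual-stable partition of the Hodge--Tate weights into blocks each of which could itself be automorphic. Doing this for \emph{every} $n$ requires either enough cases of functoriality and strong multiplicity one, or a ``big image for a density-one set of $\lambda$'' input of Larsen--Pink strength sufficient to detect irreducibility, and neither is available in general; there is also the subtlety that the CM base change used in the potential automorphy step may fail to preserve cuspidality, so extra hypotheses tend to creep in. When $\pi$ has an auxiliary local component of Steinberg type --- so that $\rho_{\pi, \lambda}$ restricted to the corresponding decomposition group is forced to be indecomposable --- or for small $n$, or under suitable extra regularity hypotheses on $\pi_\infty$, the argument can be pushed through unconditionally; it is the general case that remains a conjecture, and that is why it is assumed here.
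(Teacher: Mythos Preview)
You correctly identify that this statement is a conjecture that the paper \emph{assumes} rather than proves; the paper itself offers no proof and explicitly says ``The proof of this conjecture is a widely open problem in general, and only some partial progress has been obtained so far.'' There is therefore nothing in the paper to compare your sketch against.

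Your outline of how one would attack the conjecture --- decompose $\rho_{\pi,\lambda}$ into irreducible pieces, show each piece is regular, pure, and polarised, invoke potential automorphy as in \cite{BLGGT14} to make the pieces automorphic after base change, and then use strong multiplicity one to force $r=1$ --- is indeed the standard strategy, and your identification of the obstruction (ruling out a non-trivial self-duality-stable partition of Hodge--Tate weights, and the attendant issues with the residual image hypotheses needed for potential automorphy) is accurate. This is essentially the approach behind the partial results the paper cites (\cite[Theorem 5.5.2]{BLGGT14}, \cite[Theorem 1.7]{PT15}), which succeed under extra hypotheses such as extremely regular weight or $n \le 5$. One minor caution: your final paragraph about upgrading from a density-one set of $\lambda$ to a density-one set of rational primes $\ell$ via $\lambda$-independence of the formal character is more heuristic than rigorous as stated; the formal character alone does not detect irreducibility, and the actual arguments in the literature proceed differently (e.g.\ via control of the residual image for almost all $\ell$). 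But since the paper does not attempt a proof, this is a side remark rather than a defect in your comparison.
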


The proof of this conjecture is a widely open problem in general, and only some partial progress has been obtained so far, see for instance \cite[Theorem 5.5.2]{BLGGT14} and \cite[~Theorem 1.7]{PT15}. Assuming this conjecture, we prove the following result.

\begin{theorem} \label{decomposition_theorem_automorphic}
Let $\mathcal{R} = \set{\rho_\lambda}_{ \lambda \in \Lambda}$ be a pure, regular, totally odd, conjugate self-dual geometric compatible system of rank $n$ representations of $\Gamma_F$ with coefficients in a number field $E$. Assume Conjecture \ref{conjecture_irreducibility_automorphic_compatible_systems}. Then, $\mathcal{R}$ has a Lie-irreducible decomposition over a finite extension $E'$ of $E$.
\end{theorem}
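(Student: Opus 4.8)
The plan is to extract a Lie-irreducible decomposition from a single member of the compatible system, using the work of Patrikis in the number field case, and then to propagate it to the whole system by means of potential automorphy and a trace argument. Concretely, fix a prime $\ell$ in the set $\mathcal{L}$ of Conjecture \ref{conjecture_irreducibility_automorphic_compatible_systems} and a place $\lambda_0 \mid \ell$ of $E$ (after a harmless finite base change, so that we may work over a finite extension $E'$ from the start). Then $\rho_{\lambda_0}$ is absolutely irreducible, hence semisimple, and by \cite[Proposition 3.4.1]{pat12} we may write
\[ \rho_{\lambda_0} \cong \oplus_{i=1}^k \Ind_{\Gamma_i}^{\Gamma_F} (\sigma_{i, \lambda_0} \otimes \omega_i), \]
with $\Gamma_i \subset \Gamma_F$ open (each of the form $\Gamma_{F_i}$ for a finite extension $F_i / F$), $\sigma_{i, \lambda_0}$ absolutely Lie-irreducible, and $\omega_i$ Artin. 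Enlarging $E'$ again, I would arrange that all the $\Gamma_i$, all the $\omega_i$, and all the constituents are defined over $E'$, and I would record that each $\sigma_{i, \lambda_0}$ is itself, up to twist by $\omega_i^{-1}$ and up to induction, a subquotient of $\rho_{\lambda_0}\mid_{\Gamma_i}$, so it inherits regularity, purity, and (after choosing a polarisation) conjugate self-duality of the appropriate restricted/twisted kind.

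Next I would upgrade each $\sigma_{i, \lambda_0}$ to a compatible system $\mathcal{S}_i = \set{\sigma_{i, \lambda}}_{\lambda \in \Lambda_i}$ of absolutely Lie-irreducible representations of $\Gamma_i$. This is where potential automorphy enters: the representations $\sigma_{i, \lambda_0}$ are, after twisting, irreducible, regular, and odd essentially self-dual (or polarisable), so the automorphy lifting/potential automorphy machinery of \cite{BLGGT14} applies and produces a compatible system $\mathcal{S}_i$ defined over a number field, one member of which is $\sigma_{i, \lambda_0}$. Shrinking the index sets, I may take all the $\mathcal{S}_i$ to be indexed by a common set $\Lambda'$ of places of $E'$ of residual Dirichlet density $1$, contained in (a set of places above) $\Lambda \cap \bigcap_i \Lambda_i$, and — using Conjecture \ref{conjecture_irreducibility_automorphic_compatible_systems} applied to the automorphic representations underlying the $\mathcal{S}_i$, together with the fact that Lie-irreducibility propagates through a compatible system once it holds at a density-one set of places (the formal character computations of \S\ref{formal_character_variety_characteristic_polynomials} and the $\ell$-independence of connected components, Proposition \ref{independence_connected_components}, show the algebraic monodromy group cannot acquire extra structure) — I may assume each $\sigma_{i,\lambda}$ is absolutely Lie-irreducible for $\lambda \in \Lambda'$.

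Finally I would show $\rho_\lambda \cong \oplus_{i=1}^k \Ind_{\Gamma_i}^{\Gamma_F}(\sigma_{i,\lambda} \otimes \omega_i)$ for all $\lambda \in \Lambda'$. Both sides are semisimple compatible systems of $\Gamma_F$-representations; by Brauer–Nesbitt it suffices to match characteristic polynomials of Frobenius $\Frob_v$ for $v$ in a density-one set of places, and by the Chebotarev/Brauer-type density argument it suffices to match traces of all $\rho_{\lambda}(\Frob_v)$. These traces lie in $E'$ and are independent of $\lambda$ by compatibility (Definition \ref{compatible_systems_definition}, condition (2)), and the same holds for the right-hand side since each $\mathcal{S}_i$ and each $\omega_i$ is a compatible system and induction and tensor are exact and compatible with restriction of Frobenii to $\Gamma_i$. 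As the identity holds at $\lambda = \lambda_0$ by construction, equality of the ($\lambda$-independent) trace functions at $\lambda_0$ forces it at every $\lambda \in \Lambda'$, giving the desired isomorphism over $E'$, and hence the Lie-irreducible decomposition of Definition \ref{Lie-irreducible_decomposition_definition} with $\sum_i m_i d_i [\Gamma_F : \Gamma_i] = n$.

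The main obstacle is the second step: producing the compatible systems $\mathcal{S}_i$ extending the individual $\sigma_{i,\lambda_0}$. This requires that the Lie-irreducible constituents themselves satisfy hypotheses strong enough to invoke potential automorphy — regularity and a self-duality/polarisability property — which must be tracked carefully through the operations of restriction to $\Gamma_i$, untwisting by $\omega_i$, and passage to Lie-irreducible constituents; in particular one must check that the Hodge–Tate weights of the constituents remain regular and that a polarisation descends, so that \cite{BLGGT14} is genuinely applicable. The propagation of Lie-irreducibility across $\Lambda'$ (as opposed to absolute irreducibility, which is exactly Conjecture \ref{conjecture_irreducibility_automorphic_compatible_systems}) is the second delicate point, handled via the $\lambda$-independence of the formal character and of the component group from \S\ref{formal_character_variety_characteristic_polynomials}.
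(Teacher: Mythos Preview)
Your overall architecture matches the paper's, but two technical points are genuinely missing, and without them the potential automorphy step cannot be invoked.

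First, the Artin twists. You apply \cite[Proposition 3.4.1]{pat12} in full generality and obtain factors $\sigma_{i,\lambda_0}\otimes\omega_i$ with $\omega_i$ Artin of dimension $d_i$. But then ``untwisting by $\omega_i^{-1}$'' is not an operation when $d_i>1$, and there is no direct way to see that $\sigma_{i,\lambda_0}$ itself (as opposed to $\sigma_{i,\lambda_0}\otimes\omega_i$) inherits a polarisation or regularity from $\rho_{\lambda_0}$. The paper circumvents this by exploiting regularity: by \cite[Lemma 5.3.1(2)]{BLGGT14} a regular representation is absolutely Lie-multiplicity-free, and in that situation \cite[Lemma 3.4.6(1)]{pat12} gives each irreducible constituent as a \emph{pure} induction $\rho_{i,\lambda_0}\cong\Ind_{\Gamma_{F_i}}^{\Gamma_F}\sigma_i$ with no Artin twist. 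Then $\sigma_i$ is a genuine constituent of $\rho_{i,\lambda_0}\!\mid_{\Gamma_{F_i}}$ and inherits what is needed. (The paper also first breaks $\mathcal R$ into \emph{compatible systems} of irreducibles via \cite[Theorem 2.1]{PT15}, rather than decomposing a single $\rho_{\lambda_0}$; this is what makes the later trace comparison straightforward.)

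Second, and more seriously, to apply \cite[Theorem 5.5.1]{BLGGT14} to $\sigma_i$ you need the field $F_i$ to be CM (otherwise ``totally odd, conjugate self-dual'' is not even defined over $F_i$), together with absolute irreducibility of $\overline{\sigma}_i\!\mid_{\Gamma_{F_i(\zeta_\ell)}}$ and potential diagonalisability at $\ell$. None of this is automatic from a generic choice of $\lambda_0$. The paper devotes real effort to \emph{choosing} $\lambda_0$ well: it uses \cite[Proposition 5.3.2]{BLGGT14} to get residual irreducibility of $\overline{\rho}_{i,\lambda_0}\!\mid_{\Gamma_{F(\zeta_{\ell_0})}}$ (then Mackey gives it for $\overline{\sigma}_i$), arranges $\ell_0$ unramified in the relevant fields with Hodge--Tate weights in a Fontaine--Laffaille range (hence crystallinity and potential diagonalisability via \cite[Lemma 2.2.9]{pat12}), and --- the most delicate point --- proves via an argument adapted from \cite[Lemma 3.4.13]{pat12}, exploiting regularity and a split/non-split place dichotomy, that $\lambda_0$ can be taken so that $F_i=F_i^{\lambda_0}$ is CM. Your proposal acknowledges the difficulty but does not supply any of these ingredients.

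A minor point: for propagating Lie-irreducibility across $\mathcal S_i$ the paper does not use formal-character arguments but rather \cite[Corollary 3.4.11]{pat12}, which says that in a compatible system of absolutely irreducible, absolutely Lie-multiplicity-free representations, Lie-irreducibility at one place forces it at all; the hypotheses are supplied by a second invocation of Conjecture~\ref{conjecture_irreducibility_automorphic_compatible_systems} and by regularity.
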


\begin{proof}
Assuming Conjecture \ref{conjecture_irreducibility_automorphic_compatible_systems}, by \cite[Theorem 2.1]{PT15} we have that there exist subsets $\Lambda_i$ of $\Lambda$, for $i = 1, \ldots, k$, of residual Dirichlet density $1$, and strictly pure, regular, totally odd, conjugate self-dual compatible systems $\mathcal{R}_i = \set{\rho_{i, \lambda}}_{\lambda \in \Lambda_i}$ of irreducible representations $\rho_{i, \lambda} : \Gamma_F \ra \GL_{n_i} (\alg{E}_\lambda)$ for $i = 1, \ldots, k$, such that
\[ \rho_\lambda \otimes_{E_\lambda} \alg{E}_\lambda \cong \oplus_{i = 1}^k \rho_{i, \lambda}, \]
for each $\lambda \in \cap_{i = 1}^k \Lambda_i$. Furthermore, after possibly removing finitely many places from each $\Lambda_i$, we can assume that for each $\lambda \in \Lambda_i$ we have $\ell \geq 2 (n+1)$, where $\ell$ denotes the residue characteristic of $\lambda$.

Fix an $i$, and consider the compatible system $\mathcal{R}_i$. By \cite[Lemma 1.2]{PT15}, we can assume that $\mathcal{R}_i$ is defined over a CM field $E_i$. With a slight abuse of notation, we still denote by $\Lambda_i$ the corresponding set of places of $E_i$. For all $\lambda \in \Lambda_i$, let~$G_{i, \lambda}$~be the algebraic monodromy group\footnote{This would be a reductive algebraic subgroup of $\GL_{n, E_{i, \lambda'}'}$, for $E_i'$ a finite extension of $E_i$ where $\mathcal{R}_i$ takes coefficients, and $\lambda'$ a place of $E_i'$ above $\lambda$.} of~$\rho_{i, \lambda}$, and let $G_{i, \lambda}^\circ$ be the connected component of the identity of $G_{i, \lambda}$. By Lemma \ref{independence_connected_components} there exists a finite Galois extension $F'_i / F$ such that the representation $\rho_{i, \lambda}$ induces an isomorphism $\Gal (F_i' / F) \isom G_{i, \lambda} / G_{i, \lambda}^\circ$ for all $\lambda \in \Lambda_i$. By applying \cite[Proposition 5.3.2]{BLGGT14}, we get that there exists a subset $\Lambda_i^0$ of $\Lambda_i$ of residual Dirichlet density~$1$~such that if $\lambda \in \Lambda_i^0$, and $\ell$ is the residue characteristic of~$\lambda$, then $\overline{\rho}_{i, \lambda} \! \mid_{\Gamma_{F(\zeta_\ell)}}$ is absolutely irreducible. Up to removing finitely many places from $\Lambda_i^0$, we may further assume that if $\lambda \in \Lambda_i^0$, and $\ell$ is the residue characteristic of $\lambda$, then
\begin{itemize}
\item $\zeta_\ell \notin F_i'$,
\item $\ell$ is unramified in $F_i'$ and lies below no element of the set of bad places for $\mathcal{R}_i$,
\item the Hodge-Tate weights of $\rho_{i, \lambda}$ lie in a range of the form $[a_i, a_i + \ell - 2]$.
\end{itemize}

Let $F_i'^{, \mathrm{cm}}$ denote the maximal CM subfield of $F_i'$. Then, assuming Conjecture \ref{conjecture_irreducibility_automorphic_compatible_systems}, by \cite[Theorem 2.1]{PT15} we have that there exist subsets $\Lambda_i'$ of $\Lambda_i^0$ of residual Dirichlet density $1$ such that each irreducible component of $\rho_{i, \lambda} \! \mid_{\Gamma_{F_i'^{, \mathrm{cm}}}}$ is totally odd, conjugate self-dual for each $\lambda \in \Lambda_i'$.

By \cite[Lemma 5.3.1(2)]{BLGGT14} each $\rho_{i, \lambda}$ is absolutely Lie-multiplicity free\footnote{We say that a rank $n$ representation $\rho$ of a profinite group $\Gamma$ with coefficients in a field $k$ of characteristic zero is \emph{absolutely Lie-multiplicity free} if for any open subgroup $\Gamma'$ of $\Gamma$ any absolutely irreducible $\Gamma'$-subrepresentation of $\rho$ has multiplicity $1$.}. Therefore, for each $\lambda \in \Lambda_i'$, by \cite[Lemma 3.4.6(1)]{pat12} there exist an intermediate field $F \subset F_i^{\lambda} \subset F_i'$, and a rank $m_i^\lambda$ absolutely Lie-irreducible representation~$\sigma_i^\lambda$~of~$\Gamma_{F_i^\lambda}$~such that we can write~$\rho_{i, \lambda} \cong \Ind_{\Gamma_{F_i^\lambda}}^{\Gamma_F} \sigma_i^\lambda$. We now prove that there exists $\lambda_0 \in \Lambda_i'$ such that~$F_i^{\lambda_0}$~is a CM field\footnote{The following argument is largely adapted on that of \cite[Lemma 3.4.13]{pat12}.}. Denote by $F_i^{\lambda, \mathrm{cm}}$ the maximal CM subfield of $F_i^{\lambda}$. For simplicity, let us enlarge $E_i$ to contain the maximal CM subfield of $F_i'$, so that $F_i^{\lambda, \mathrm{cm}} \subset E_i$ for all $\lambda \in \Lambda_i'$, and let us take its Galois closure. Again with a slight abuse of notation, let us still denote its corresponding set of places by $\Lambda_i'$. If $F_i^{\lambda} \neq F_i^{\lambda, \mathrm{cm}}$ for all $\lambda \in \Lambda_i'$, then we can find a place $\lambda_0 \in \Lambda_i'$ with residue characteristic $\ell_0$ such that $\ell_0$ splits in $E_i$ but not in~$F_i^{\lambda_0}$. Let now~$w_0$~be a non-split place of $F_i^{\lambda_0}$ of residue characteristic~$\ell_0$. Since $F_{i, w_0}^{\lambda_0}$ does not embed in $E_{i, \lambda_0} = \Q_{\ell_0}$ by assumption, we can deduce by an argument analogous to that of~\cite[Lemma 3.4.13]{pat12} that $\rho_{i, \lambda_0}$ is not regular, which is a contradiction. Therefore, we get that $F_i^{\lambda_0} = F_i^{\lambda_0, \mathrm{cm}}$. Also, $\ell_0 \geq 2 (m_i^{\lambda_0} + 1)$ and $\zeta_{\ell_0} \notin F_i^{\lambda_0}$ by the previous assumptions. For simplicity of notation, we set $F_i = F_i^{\lambda_0}$.

The representation $\sigma_i^{\lambda_0}$ is an irreducible component of $\rho_{i, \lambda_0} \! \mid_{\Gamma_{F_i}}$, and so it is totally odd, conjugate self-dual, as $F_i \subset F_i'^{, \mathrm{cm}}$, and its set of Hodge-Tate weights is a subset of the set of Hodge-Tate weights of $\rho_{i, \lambda_0}$. This gives that $\sigma_i^{\lambda_0}$ has Hodge-Tate weights lying in $[a_i, a_i + \ell_0 - 2]$. Since $\ell_0$ is unramified in~$F_i'$, and so it is in $F_i$ as well, we have that $\sigma_i^{\lambda_0}$ is crystalline at each place $w_0$ of $F_i$ lying above $\ell_0$ by \cite[Lemma 2.2.9]{pat12}. Therefore, $\sigma_i^{\lambda_0}$ is potentially diagonalizable at each place $w_0$ of $F_i$ lying above $\ell_0$.

By applying Mackey restriction formula to $\overline{\rho}_{i, \lambda_0} \! \mid_{\Gamma_{F(\zeta_{\ell_0})}} \cong \mathrm{Res}^{\Gamma_F}_{\Gamma_{F(\zeta_{\ell_0})}} \Ind_{\Gamma_{F_i}}^{\Gamma_F} \overline{\sigma}_i^{\lambda_0}$, which is absolutely irreducible by assumption, we deduce that $\overline{\sigma}_i^{\lambda_0} \! \mid_{\Gamma_{F_i (\zeta_{\ell_0})}}$ is absolutely irreducible. Therefore, we can apply \cite[Theorem 5.5.1]{BLGGT14} to get that $\sigma_i^{\lambda_0}$ is part of a strictly pure compatible system $\mathcal{S}_i = \set{\sigma_{i, \lambda}}_{\lambda \in \Lambda_i'}$ of representations of $\Gamma_{F_i}$ defined over~$E_i$.

Since~$\mathcal{S}_i$~is regular, each representation $\sigma_{i, \lambda}$~is absolutely Lie-multiplicity free, again by \cite[Lemma 5.3.1(2)]{BLGGT14}. Also, again by assuming Conjecture \ref{conjecture_irreducibility_automorphic_compatible_systems}, we have that there exists a subset $\Lambda_i''$ of $\Lambda_i'$ of residual Dirichlet density~$1$, with $\lambda_0 \in \Lambda_i''$, such that $\sigma_{i, \lambda}$ is absolutely irreducible for each $\lambda \in \Lambda_i''$. Since $\sigma_{i, \lambda_0} \cong \sigma_i^{\lambda_0}$ is absolutely Lie-irreducible, and absolute Lie-irreducibility in a compatible system of absolutely irreducible, absolutely Lie-multiplicity free representations is independent of $\lambda$ by \cite[Corollary 3.4.11]{pat12}, we get that $\sigma_{i, \lambda}$ is absolutely Lie-irreducible for each $\lambda \in \Lambda_i''$.

For every finite place $v$ of $F$ outside the set of bad places for $\mathcal{R}_i$ and every $\lambda \in \Lambda_i'$ not lying above the residue characteristic of $v$ we have that
\begin{align*}
\tr \rho_{i, \lambda} (\Frob_v) &= \tr \rho_{i, \lambda_0} (\Frob_v) \\
&= \tr \Ind_{\Gamma_{F_i}}^{\Gamma_F} \sigma_i^{\lambda_0} (\Frob_v) \\
&= \tr \Ind_{\Gamma_{F_i}}^{\Gamma_F} \sigma_{i, \lambda} (\Frob_v),
\end{align*}
where the first equality follows from the independence of $\lambda$ of the characteristic polynomials at the Frobenius elements in the compatible system $\mathcal{R}_i$, and the last equality follows from the usual formula for the trace of an induced representation, see \cite[\S 5.5]{BLGGT14} for instance, and the independence of~$\lambda$~of the characteristic polynomials at the Frobenius elements in the compatible system $\mathcal{S}_i$ . Combining the \v{C}ebotarev density theorem with \cite[\S 12.1, Proposition 3]{bourbaki_algebre_chapitre8} we deduce that~$\rho_{i, \lambda} \cong \Ind_{\Gamma_{F_i}}^{\Gamma_F} \sigma_{i, \lambda}$~for every $\lambda \in \Lambda_i'$. Since $\sigma_{i, \lambda}$ is absolutely Lie-irreducible for each $\lambda$ in the set $\Lambda_i''$ of residual Dirichlet density $1$, this proves that
\[ \mathcal{R} \cong \oplus_{i = 1}^k \Ind_{\Gamma_{F_i}}^{\Gamma_F} \mathcal{S}_i\]
is a Lie-irreducible decomposition for $\mathcal{R}$ over an extension $E'$ of $E$ and the fields $E_i$.
\end{proof}

Combining this with Corollary \ref{independence_Lie-irreducible_decomposition}, we get the following result.

\begin{corollary} \label{independence_automorphic}
Let $\mathcal{R} = \set{\rho_\lambda}_{\lambda \in \Lambda}$ be a pure, regular, totally odd, conjugate self-dual geometric compatible system of rank $n$ representations of $\Gamma_F$ with coefficients in $E$. Assume Conjecture \ref{conjecture_irreducibility_automorphic_compatible_systems}. Then, there exist a finite extension $L$ of $E$, a subset $\Lambda'$ of $\Lambda$ of residual Dirichlet density~$1$, and a connected reductive algebraic subgroup $G$ of $\GL_{n, L}$ defined over $L$ such that for each~$\lambda \in \Lambda'$~there exists a finite place $\mathfrak{l}$ of $L$ lying above $\lambda$ such that $G_{\lambda}^\circ \times_{E_{\lambda}} L_{\mathfrak{l}}$ is conjugate to $G \times_{L} L_{\mathfrak{l}}$ over $L_{\mathfrak{l}}$.
\end{corollary}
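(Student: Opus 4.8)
The plan is to deduce Corollary \ref{independence_automorphic} by simply chaining the two main results already proved in this section and in \S \ref{independence_absolutely_irreducible}: the existence of a Lie-irreducible decomposition (Theorem \ref{decomposition_theorem_automorphic}) and the passage from such a decomposition to $\lambda$-independence (Corollary \ref{independence_Lie-irreducible_decomposition}). In particular, no new geometric or automorphic input is required; the argument is essentially formal, and what little there is to check amounts to bookkeeping with coefficient fields and with subsets of $\Lambda$ of residual Dirichlet density $1$.

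Concretely, I would first apply Theorem \ref{decomposition_theorem_automorphic}. Its hypotheses are met: $\mathcal{R}$ is a pure, regular, totally odd, conjugate self-dual geometric compatible system of rank $n$ representations of $\Gamma_F$ with coefficients in $E$, and Conjecture \ref{conjecture_irreducibility_automorphic_compatible_systems} is assumed. The theorem therefore produces a finite extension $E'$ of $E$ over which $\mathcal{R}$ admits a Lie-irreducible decomposition in the sense of Definition \ref{Lie-irreducible_decomposition_definition}. Next I would invoke Corollary \ref{independence_Lie-irreducible_decomposition} with $\Gamma = \Gamma_F$, equipped with its geometric Frobenii $\{\Frob_v\}$ (a group with Frobenii, the density coming from the \v{C}ebotarev density theorem), and with the number field $E$ and the finite extension $E'$ just obtained. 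The corollary then yields a finite extension $L$ of $E'$, hence of $E$, a subset $\Lambda'$ of $\Lambda$ of residual Dirichlet density $1$, and a connected reductive algebraic subgroup $G$ of $\GL_{n, L}$ defined over $L$ such that, for every $\lambda \in \Lambda'$, there is a finite place $\mathfrak{l}$ of $L$ above $\lambda$ with $G_\lambda^\circ \times_{E_\lambda} L_{\mathfrak{l}}$ conjugate to $G \times_L L_{\mathfrak{l}}$ over $L_{\mathfrak{l}}$. This is exactly the statement of the corollary, so the proof is complete.

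The only step that requires any attention --- and the closest thing to an obstacle --- is verifying that all the density-$1$ conditions survive the process. Theorem \ref{decomposition_theorem_automorphic} gives the decomposition only on an intersection of finitely many sets of residual Dirichlet density $1$, and inside the proof of Corollary \ref{independence_Lie-irreducible_decomposition} one intersects further with the density-$1$ sets produced by applying Proposition \ref{independence_theorem} to each Lie-irreducible constituent $\mathcal{S}_i$; since a finite intersection of sets of residual Dirichlet density $1$ again has residual Dirichlet density $1$, the resulting $\Lambda'$ is as required. One should likewise note that the passages from $E$ to $E'$ and then to $L$ are harmless: at each stage the relevant set of places of the larger field is, with the usual abuse of notation, a density-$1$ set lying above the previous one, and the conjugacy statement is between the base changes of $G_\lambda^\circ$ and $G$ to the completions $L_{\mathfrak{l}}$, which is precisely what the conclusion asserts.
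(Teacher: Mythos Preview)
Your proposal is correct and follows exactly the paper's own approach: the paper simply states that the corollary is obtained by combining Theorem \ref{decomposition_theorem_automorphic} with Corollary \ref{independence_Lie-irreducible_decomposition}, which is precisely the chain you describe. The additional bookkeeping you spell out (density-$1$ intersections and coefficient-field enlargements) is implicit in those two results and requires no further justification.
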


\begin{remark}
In Theorem \ref{decomposition_theorem_automorphic}, assuming Conjecture \ref{conjecture_irreducibility_automorphic_compatible_systems} is required in order to apply \cite[Theorem 5.5.1]{BLGGT14} to get potential automorphy of certain $\lambda$-adic representations. This in turn guarantees that such representations can be ``extended'' to a compatible system. Refined potential automorphy results would then eventually imply more general versions of Theorem \ref{decomposition_theorem_automorphic} and Corollary \ref{independence_automorphic}. Notice also that the conclusions of Theorem \ref{decomposition_theorem_automorphic} and Corollary \ref{independence_automorphic} hold unconditionally when the compatible system has extremely regular weights, in the sense of \cite[\S 5.1]{BLGGT14}.
\end{remark}

\begin{remark}
Let $\pi$ be a regular algebraic, conjugate self-dual, cuspidal automorphic representation of $\GL_n (\mathbb{A}_F)$, and let $\mathcal{R}_\pi = \set{\rho_{\pi, \lambda}}_{\lambda \in |E_\pi|}$ be the corresponding compatible system of representations of $\Gamma_F$. Assuming Conjecture \ref{conjecture_irreducibility_automorphic_compatible_systems}, the conclusions of Theorem \ref{decomposition_theorem_automorphic} and Corollary \ref{independence_automorphic} clearly hold for $\mathcal{R}_\pi$. We then have that there exist a finite (possibly trivial) CM extension $F_\pi$ of $F$ and a compatible system $\mathcal{S} = \set{\sigma_{\lambda}}_{\lambda \in \Lambda'}$ of absolutely Lie-irreducible representations of $\Gamma_{F_\pi}$ such that $\mathcal{R}_\pi \cong \mathrm{Ind}_{\Gamma_{F_\pi}}^{\Gamma_F} \mathcal{S}$. Notice in particular that the compatible system $\mathcal{S}$ is by construction the compatible system attached to a regular algebraic, conjugate self-dual, cuspidal automorphic representation $\tau$ of $\GL_m (\mathbb{A}_{F_\pi})$, and so $\pi$ should be thought as the ``automorphic induction'' of $\tau$ from $F_\pi$. In analogy with the case of modular forms of weight $k \geq 2$, we suggest to think of $\pi$ as having ``complex multiplication'' by $F_\pi$. Also, we have that there exists a connected reductive group $G_\pi$ defined over a finite extension of $E_\pi$ which interpolates the groups of connected components of the algebraic monodromy groups of $\rho_{\pi, \lambda}$, for $\lambda$ in a set of places of residual Dirichlet density $1$. In analogy with the Mumford-Tate conjecture, the group $G_\pi$ should be related to a notion of ``Mumford-Tate group'' for $\pi$.
\end{remark}

\section{Compatible systems in the positive characteristic case} \label{independence_compatible_systems_lisse_sheaves}

In this last section we see applications of the abstract independence results of \S \ref{independence_absolutely_irreducible} to positive characteristic settings. We start with the case of compatible systems of representations of the absolute Galois group of a global function field, and we then move to compatible systems of lisse sheaves on a scheme of finite type over a finite field.

Let $p$ be a prime, let $q$ be a power of $p$, and let $C$ be a smooth projective curve, geometrically connected over $\mathbb{F}_q$. Let $F$ be the field of rational functions on $C$, and let $\Gamma_F$ denote the absolute Galois group of $F$. The following preliminary result is essentially a reformulation in the characteristic $p$ setting of \cite[Proposition 3.4.1]{pat12}, which in turn adapts the arguments of \cite[Proposition 1]{katz87}.

\begin{lemma} \label{decomposition_representations}
Let $\rho : \Gamma_F \ra \GL_n (\alg{\Q}_\ell)$ be an irreducible representation. Then either $\rho$ is induced from a representation of an open proper subgroup of $\Gamma_F$, or $\rho$ is Lie-irreducible, or there exist an integer $d \geq 2$ dividing $n$, a Lie-irreducible representation $\sigma$ of $\Gamma_F$ of dimension $n/d$, and an Artin representation $\omega$ of $\Gamma_F$ of dimension $d$ such that $\rho \cong \sigma \otimes \omega$.
\end{lemma}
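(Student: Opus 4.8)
I would follow the strategy of Katz \cite[Proposition 1]{katz87} and Patrikis \cite[Proposition 3.4.1]{pat12}; the group-theoretic core is insensitive to the characteristic of $F$, and the only genuinely arithmetic input is a lifting theorem for projective Galois representations. Let $G \subset \GL_n$ be the Zariski closure of $\rho(\Gamma_F)$, which is reductive since $\rho$ is irreducible, let $G^\circ$ be its identity component, and set $\Gamma_1 \eqdef \rho^{-1}(G^\circ)$, an open normal subgroup with finite quotient $Q \eqdef \Gamma_F/\Gamma_1 \cong G/G^\circ$, so that $\rho(\Gamma_1)$ is Zariski-dense in the connected reductive group $G^\circ$. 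Two elementary facts will be used throughout: the $\Gamma_1$-submodules of $\alg{\Q}_\ell^n$ coincide with its $G^\circ$-submodules; and for every open subgroup $\Gamma' \subset \Gamma_F$ the image $\rho(\Gamma' \cap \Gamma_1)$ is again Zariski-dense in $G^\circ$, since its Zariski closure is a closed subgroup of finite index in the connected group $G^\circ$. In particular $\rho$ is Lie-irreducible if and only if $G^\circ$ acts irreducibly on $\alg{\Q}_\ell^n$.

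Decompose $\alg{\Q}_\ell^n$ as a $G^\circ$-module --- it is semisimple, $G^\circ$ being reductive in characteristic $0$ --- and split into three cases. If it is a direct sum of at least two pairwise non-isomorphic $G^\circ$-irreducibles, then $\Gamma_F$ permutes the corresponding isotypic components of $\rho|_{\Gamma_1}$, transitively because $\rho$ is $\Gamma_F$-irreducible, so Clifford theory writes $\rho$ as induced from the stabiliser of one component, a proper open subgroup: the first alternative. If $\alg{\Q}_\ell^n$ is $G^\circ$-irreducible, then $\rho$ is Lie-irreducible by the remark above: the second alternative. Otherwise $\alg{\Q}_\ell^n \cong W^{\oplus d}$ with $W$ an irreducible $G^\circ$-module and $d \geq 2$; write $\alg{\Q}_\ell^n = W \otimes \alg{\Q}_\ell^d$ with $G^\circ$ acting through the first factor, and let $\tau$ be the resulting $(n/d)$-dimensional representation of $\Gamma_1$ on $W$, which is irreducible and, by the remark, Lie-irreducible as a representation of $\Gamma_1$. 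Since there is a single isotypic component, each $\rho(\gamma)$ normalises $G^\circ$ and satisfies $\tau^\gamma \cong \tau$, so Schur's lemma forces $\rho(\gamma) = \phi_\gamma \otimes \omega_\gamma$ with $\phi_\gamma \in \GL(W)$ and $\omega_\gamma \in \GL_d(\alg{\Q}_\ell)$, each well-defined up to a scalar. This yields a projective representation $\overline{\phi}\colon \Gamma_F \to \mathrm{PGL}(W)$ restricting on $\Gamma_1$ to the projectivisation of $\tau$, and a projective representation $\overline{\omega}\colon \Gamma_F \to \mathrm{PGL}_d(\alg{\Q}_\ell)$ which is trivial on $\Gamma_1$, hence of finite image.

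In this last case the plan is to lift $\overline{\phi}$ to a genuine continuous representation $\sigma\colon \Gamma_F \to \GL(W)$: this uses the analogue over global function fields of Tate's theorem that projective $\ell$-adic representations of the absolute Galois group of a global field lift to linear ones (equivalently, the vanishing of the obstruction in $H^2(\Gamma_F, \alg{\Q}_\ell^\times)$). Then $\sigma|_{\Gamma_1} \cong \tau \otimes \chi$ for a character $\chi$ of $\Gamma_1$, and since twisting by a character preserves Lie-irreducibility and Lie-irreducibility only involves restrictions to open subgroups, $\sigma$ is Lie-irreducible. Defining $\omega$ by $\rho(\gamma) = \sigma(\gamma) \otimes \omega(\gamma)$ then gives a genuine continuous homomorphism $\omega\colon \Gamma_F \to \GL_d(\alg{\Q}_\ell)$ whose projectivisation factors through $Q$, so $\omega$ has finite image modulo scalars; a final twist of the pair $(\sigma, \omega)$ by a character of $\Gamma_F$ --- performed as in Katz's and Patrikis's arguments, using that $\chi$ is $Q$-invariant --- makes $\omega$ an Artin representation while keeping $\sigma$ Lie-irreducible, so that $\rho \cong \sigma \otimes \omega$ with $d \geq 2$ dividing $n$ and $\dim \sigma = n/d$.

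The main obstacle is entirely within this third case: producing the genuine Lie-irreducible $\sigma$ demands the projective-lifting theorem over the function field $F$ --- this is the sole essential use of the hypothesis that $\Gamma_F$ is the absolute Galois group of a global field --- and the passage from ``finite image modulo scalars'' to ``Artin'' for the companion factor $\omega$ requires some bookkeeping with determinant and finite-order characters; both are precisely the delicate points handled in the arguments of Katz and Patrikis that this lemma adapts to positive characteristic.
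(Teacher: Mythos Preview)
Your proposal is correct and follows essentially the same route as the paper: the paper's proof is nothing more than a pointer to \cite[Proposition 3.4.1]{pat12} together with the remark that the single arithmetic ingredient --- Tate's vanishing $H^2(\Gamma_F,\Q/\Z)=0$, which underlies the projective-lifting step you isolate --- carries over to the function-field setting by \cite[Theorem 4]{serre77}. You have simply unpacked that reference, and your identification of the lifting theorem as the only place where the global-field hypothesis enters matches the paper's emphasis exactly.
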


The proof of this lemma is just the same as the proof of \cite[Proposition 3.4.1]{pat12}, where the key ingredient is the vanishing of $H^2 (\Gamma_F, \Q / \Z)$ given by Tate's theorem. This holds true also in the positive characteristic case, see \cite[Theorem 4]{serre77}.

We prove the following result.

\begin{theorem} \label{decomposition_theorem_curves}
Let $\mathcal{R} = \set{\rho_\lambda}_{\lambda \in \Lambda}$ be a compatible system of semisimple representations of $\Gamma_F$ with coefficients in $E$. Then, $\mathcal{R}$ has a Lie-irreducible decomposition over a finite extension $E'$ of $E$.
\end{theorem}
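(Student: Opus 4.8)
The plan is to mimic the strategy outlined in Remark \ref{Lie-irreducible_decomposition_remark} and already executed in the automorphic case (Theorem \ref{decomposition_theorem_automorphic}): start from one $\lambda$-adic member of $\mathcal{R}$, decompose it using Lemma \ref{decomposition_representations}, extend the Lie-irreducible pieces to compatible systems, and then propagate the decomposition to all of $\Lambda$ via a trace argument. The essential advantage here over the number field setting is that, by the global Langlands correspondence for $\mathrm{GL}_n$ over function fields due to L. Lafforgue, every irreducible $\ell$-adic representation of $\Gamma_F$ is automorphic, hence automatically part of a compatible system (indeed the compatible system $\mathcal{R}_\pi$ attached to the corresponding cuspidal $\pi$), with $\ell$-independence of Frobenius characteristic polynomials built in. So no conjectural input is needed, which is why the statement is unconditional.

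First I would reduce to the case where each $\rho_\lambda$ is absolutely irreducible: since $\mathcal{R}$ is semisimple, after passing to a finite extension of $E$ and a density-one subset of places we may split each $\rho_\lambda$ into absolutely irreducible constituents, and (as in the proof of Theorem \ref{decomposition_theorem_automorphic}, citing the analogue of \cite[Theorem 2.1]{PT15} in the function field case, or arguing directly via Lafforgue) arrange these constituents into compatible subsystems $\mathcal{R}_i = \set{\rho_{i,\lambda}}$ of absolutely irreducible representations, so that $\mathcal{R} \cong \oplus_i \mathcal{R}_i$ over a finite extension. It then suffices to produce a Lie-irreducible decomposition for each $\mathcal{R}_i$ separately. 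Fixing $i$ and a single place $\lambda_0$, I would apply Lemma \ref{decomposition_representations} iteratively: $\rho_{i,\lambda_0}$ is either Lie-irreducible, or of the form $\sigma \otimes \omega$ with $\sigma$ Lie-irreducible and $\omega$ Artin, or induced from a proper open subgroup; in the induced case one writes $\rho_{i,\lambda_0} \cong \mathrm{Ind}_{\Gamma_{F'}}^{\Gamma_F} \tau$ for $F'/F$ finite and $\tau$ irreducible, and recurses on $\tau$ (the recursion terminates since $[\Gamma_F : \Gamma_{F'}]$ strictly decreases the dimension). The upshot is $\rho_{i,\lambda_0} \cong \mathrm{Ind}_{\Gamma_{F_i}}^{\Gamma_F}(\sigma_i^{\lambda_0} \otimes \omega_i)$ with $F_i/F$ finite, $\sigma_i^{\lambda_0}$ Lie-irreducible, and $\omega_i$ Artin. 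Note $F_i$ is again the function field of a smooth projective geometrically connected curve over a finite field (take the normalisation of $C$ in $F_i$), so Lemma \ref{decomposition_representations} and Lafforgue's correspondence apply over $F_i$ as well.

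Next, I would extend $\sigma_i^{\lambda_0}$ to a compatible system $\mathcal{S}_i = \set{\sigma_{i,\lambda}}_{\lambda \in \Lambda_i}$ of representations of $\Gamma_{F_i}$ over a finite extension of $E$: by Lafforgue, $\sigma_i^{\lambda_0}$ corresponds to a cuspidal automorphic representation of $\mathrm{GL}_{n/d}(\mathbb{A}_{F_i})$, and the full family of Galois representations it cuts out is a compatible system with $\lambda$-independent Frobenius characteristic polynomials at almost all places. The only remaining point is that absolute Lie-irreducibility should hold for $\sigma_{i,\lambda}$ on a density-one set of $\lambda$, not just at $\lambda_0$; for this I would invoke the analogue of \cite[Corollary 3.4.11]{pat12} — that in a compatible system of absolutely irreducible, absolutely Lie-multiplicity-free representations the property of being Lie-irreducible is independent of $\lambda$ — together with the fact that regular/automorphic compatible systems are Lie-multiplicity free. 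Finally, defining $\omega_i$ as the same Artin representation for all $\lambda$, I would conclude that $\mathrm{Ind}_{\Gamma_{F_i}}^{\Gamma_F}(\sigma_{i,\lambda} \otimes \omega_i)$ and $\rho_{i,\lambda}$ have the same trace on all Frobenius elements $\mathrm{Frob}_v$ for $v$ outside a finite set: at $\lambda_0$ this is the decomposition just obtained, and the equality of traces then propagates to all $\lambda \in \Lambda_i$ because both sides have $\lambda$-independent Frobenius characteristic polynomials (using the trace formula for induced representations). By Čebotarev density and \cite[\S 12.1, Proposition 3]{bourbaki_algebre_chapitre8}, semisimple representations with equal Frobenius traces are isomorphic, so $\rho_{i,\lambda} \cong \mathrm{Ind}_{\Gamma_{F_i}}^{\Gamma_F}(\sigma_{i,\lambda} \otimes \omega_i)$ for all $\lambda \in \Lambda_i$, giving the Lie-irreducible decomposition $\mathcal{R} \cong \oplus_{i=1}^k \mathrm{Ind}_{\Gamma_{F_i}}^{\Gamma_F}(\mathcal{S}_i \otimes \omega_i)$ over a finite extension $E'$ of $E$.

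The main obstacle I anticipate is the bookkeeping around extending the pieces to compatible systems \emph{with the right coefficient field} and verifying that Lie-irreducibility genuinely spreads out to a density-one set of places — i.e. establishing the function field analogues of the inputs \cite[Corollary 3.4.11]{pat12} and of the descent of a compatible system to a well-controlled field. Unlike the number field case, potential automorphy is unnecessary here (Lafforgue gives automorphy outright), so this step should be cleaner, but one must still be careful that the curve $F_i$ is of the right type and that the Artin factor $\omega_i$ can be taken constant across $\lambda$ (which it can, since Artin representations are realised over $\alg{\Q}$). Everything else is a routine transcription of the argument of Theorem \ref{decomposition_theorem_automorphic}, with Lafforgue's theorem replacing the potential automorphy machinery of \cite{BLGGT14}.
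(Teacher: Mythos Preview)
Your strategy matches the paper's: fix one place $\lambda_0$, decompose $\rho_{\lambda_0}\otimes\alg{E}_{\lambda_0}$ via Lemma \ref{decomposition_representations}, extend the Lie-irreducible factors to compatible systems using Lafforgue, and then propagate to all $\lambda$ by equating Frobenius traces and invoking \v{C}ebotarev together with \cite[\S 12.1, Proposition 3]{bourbaki_algebre_chapitre8}. The paper does not even bother with your preliminary reduction to absolutely irreducible subsystems --- it decomposes $\rho_{\lambda_0}\otimes\alg{E}_{\lambda_0}$ in one step.

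The one substantive maneuver you are missing is the character twist before invoking Lafforgue. The correspondence \cite[Th\'eor\`eme VI.9]{lafforgue02} as cited matches cuspidal automorphic representations of $\GL_m(\mathbb{A}_{F_i})$ having \emph{finite-order central character} with irreducible $\ell$-adic representations of $\Gamma_{F_i}$ having \emph{finite-order determinant}; an arbitrary Lie-irreducible $\sigma_i^{\lambda_0}$ has no reason to satisfy this, so it does not directly fall under Lafforgue's theorem. The paper handles this by choosing a character $\chi_i : \Gamma_{F_i} \to \alg{E}_{\lambda_0}^\times$ so that $\tau_i = \sigma_i^{\lambda_0} \otimes \chi_i$ has finite-order determinant, extending $\tau_i$ to a compatible system $\{\tau_{i,\lambda}\}$ via Lafforgue and separately extending $\chi_i$ to $\{\chi_{i,\lambda}\}$ via global class field theory, and then setting $\sigma_{i,\lambda} = \tau_{i,\lambda}\otimes\chi_{i,\lambda}^{-1}$. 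This is the only nontrivial trick in the proof and you should include it. As for your concern about Lie-irreducibility of $\sigma_{i,\lambda}$ spreading across $\lambda$: the paper's own proof is silent on this, and your proposed route through \cite[Corollary 3.4.11]{pat12} requires Lie-multiplicity-freeness, which in Theorem \ref{decomposition_theorem_automorphic} was supplied by regularity --- an assumption absent here --- so that particular justification does not transplant directly.
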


\begin{proof}
Fix $\lambda_0 \in \Lambda_{\neq p}$. By Lemma \ref{decomposition_representations} there exist finite extensions $F_i$ of $F$, Lie-irreducible representations $\sigma_i : \Gamma_{F_i} \ra \GL_{m_i} (\alg{E}_{\lambda_0})$, Artin representations $\omega_i : \Gamma_{F_i} \ra \GL_{d_i} (\alg{E}_{\lambda_0})$, for $i = 1, \ldots, k$, where $\sum_{i = 1}^k m_i d_i [F_i : F] = n$, such that
\[ \rho_{\lambda_0} \otimes_{E_{\lambda_0}} \alg{E}_{\lambda_0} \cong \oplus_{i = 1}^k \mathrm{Ind}_{\Gamma_{F_i}}^{\Gamma_F} (\sigma_i \otimes \omega_i). \]

For each $i$, choose $\chi_i : \Gamma_{F_i} \ra \alg{E}_{\lambda_0}^\times$ such that the determinant of $\tau_i = \sigma_i \otimes \chi_i$ has finite order. Then, $\tau_i$ extends to a compatible system $\set{\tau_{i, \lambda}}_{\lambda \in \Lambda_{\neq p}}$ by the global Langlands correspondence \cite[Th\'eor\`eme VI.9]{lafforgue02}. Also, by global class field theory we can extend $\chi_i$ to a compatible system $\set{\chi_{i, \lambda}}_{\lambda \in \Lambda_{\neq p}}$. For each $\lambda \in \Lambda_{\neq p}$, let $\sigma_{i , \lambda} = \tau_{i, \lambda} \otimes \chi_{i, \lambda}^{-1}$. We then have that $\mathcal{S}_i = \set{\sigma_{i, \lambda}}_{\lambda \in \Lambda_{\neq p}}$ is a compatible system extending $\sigma_i$.

Let $E'$ be a finite extension of $E$ such that each $\mathcal{S}_i$ has coefficients in $E'$, and each $\omega_i$ can be realised over $E'$. Then, for any place $v$ of $F$ and any $\lambda \in \Lambda_{\neq p}$ not lying above the residue characteristic of $v$ we have
\begin{align*}
\tr \rho_\lambda (\Frob_v) &= \tr \rho_{\lambda_0} (\Frob_v) \\
&= \tr \oplus_{i = 1}^k \Ind_{\Gamma_{F_i}}^{\Gamma_F} (\sigma_i \otimes \omega_i) (\Frob_v) \\
&= \tr \oplus_{i = 1}^k \Ind_{\Gamma_{F_i}}^{\Gamma_F} (\sigma_{i, \lambda} \otimes \omega_i) (\Frob_v),
\end{align*}
by independence of $\lambda$ of the characteristic polynomials at the Frobenius elements, and by the usual formulas for the traces of direct sums, tensor products, and induced representations.
Combining the \v{C}ebotarev density theorem with \cite[\S 12.1, Proposition 3]{bourbaki_algebre_chapitre8}, it then follows that
\[ \mathcal{R} \cong \oplus_{i = 1}^k \Ind_{\Gamma_{F_i}}^{\Gamma_F} (\mathcal{S}_i \otimes \omega_i) \]
is a Lie-irreducible decomposition for $\mathcal{R}$ over $E'$.
\end{proof}

By Corollary \ref{independence_Lie-irreducible_decomposition} we then have the following immediate consequence.

\begin{corollary} \label{independence_positive_characteristic}
Let $\mathcal{R} = \set{\rho_\lambda}_{\lambda \in \Lambda}$ be a compatible system of semisimple representations of $\Gamma_F$ with coefficients in $E$. Then, there exist a finite extension $L$ of $E$, a subset $\Lambda'$ of $\Lambda$ of residual Dirichlet density~$1$, and a connected reductive algebraic subgroup $G$ of $\GL_{n, L}$ defined over $L$ such that for each~$\lambda \in \Lambda'$~there exists a finite place $\mathfrak{l}$ of $L$ lying above $\lambda$ such that $G_{\lambda}^\circ \times_{E_{\lambda}} L_{\mathfrak{l}}$ is conjugate to $G \times_{L} L_{\mathfrak{l}}$ over $L_{\mathfrak{l}}$.
\end{corollary}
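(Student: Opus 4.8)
The plan is to deduce the statement directly by chaining Theorem \ref{decomposition_theorem_curves} with Corollary \ref{independence_Lie-irreducible_decomposition}, so the proof is short and the real content has already been established.

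First I would apply Theorem \ref{decomposition_theorem_curves} to the compatible system $\mathcal{R} = \set{\rho_\lambda}_{\lambda \in \Lambda}$. Since $\mathcal{R}$ is a compatible system of semisimple representations of the absolute Galois group $\Gamma_F$ of the global function field $F$ with coefficients in the number field $E$, the theorem produces a finite extension $E'$ of $E$ over which $\mathcal{R}$ admits a Lie-irreducible decomposition: there are sets $\Lambda_i$ of places of $E'$ of residual Dirichlet density $1$, open subgroups $\Gamma_{F_i} \subset \Gamma_F$, compatible systems $\mathcal{S}_i = \set{\sigma_{i,\lambda}}_{\lambda \in \Lambda_i}$ of absolutely Lie-irreducible representations of $\Gamma_{F_i}$ with coefficients in $E'$, and Artin representations $\omega_i$ of $\Gamma_{F_i}$, with $\sum_i m_i d_i [\Gamma_F : \Gamma_{F_i}] = n$, such that $\rho_\lambda \cong \oplus_{i=1}^k \Ind_{\Gamma_{F_i}}^{\Gamma_F}(\sigma_{i,\lambda} \otimes \omega_i)$ over $E'$ for all $\lambda$ in $\Lambda \cap (\cap_{i=1}^k \Lambda_i)$, a set which still has residual Dirichlet density $1$.

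Next I would invoke Corollary \ref{independence_Lie-irreducible_decomposition}, whose hypothesis is precisely that a compatible system of rank $n$ representations of a group with Frobenii with coefficients in a number field admits a Lie-irreducible decomposition over a finite extension of its field of coefficients. Here the relevant group with Frobenii is $(\Gamma_F, \set{\Frob_v}_{v \in |F|})$, which is a group with Frobenii by the \v{C}ebotarev density theorem, and the required decomposition over $E'$ is the one just obtained. The corollary then directly supplies a finite extension $L$ of $E'$, a subset $\Lambda'$ of $\Lambda$ of residual Dirichlet density $1$, and a connected reductive algebraic subgroup $G$ of $\GL_{n,L}$ defined over $L$ such that for each $\lambda \in \Lambda'$ there is a finite place $\mathfrak{l}$ of $L$ above $\lambda$ with $G_\lambda^\circ \times_{E_\lambda} L_{\mathfrak{l}}$ conjugate to $G \times_L L_{\mathfrak{l}}$ over $L_{\mathfrak{l}}$. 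Since $L/E'$ and $E'/E$ are finite, $L/E$ is finite, which is exactly the assertion.

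I do not expect a genuine obstacle here: all the substance lies in Theorem \ref{decomposition_theorem_curves} (which rests on Lemma \ref{decomposition_representations}, the vanishing $H^2(\Gamma_F, \Q/\Z) = 0$, and the global Langlands correspondence of L. Lafforgue) and in Corollary \ref{independence_Lie-irreducible_decomposition} (which rests on Proposition \ref{independence_theorem} and hence on the Larsen--Pink machinery). The only points requiring care are the routine bookkeeping of replacing $\Lambda$ by a density-one set of places of the successive finite extensions $E'$ and $L$, and the observation that the semisimplicity of the $\rho_\lambda$ — part of the hypothesis — is what guarantees that the algebraic monodromy groups $G_\lambda$ are reductive, so that the formal-character and root-datum arguments underlying Corollary \ref{independence_Lie-irreducible_decomposition} are applicable.
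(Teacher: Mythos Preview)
Your proposal is correct and follows exactly the paper's approach: the corollary is stated as an immediate consequence of Theorem \ref{decomposition_theorem_curves} combined with Corollary \ref{independence_Lie-irreducible_decomposition}, and your write-up simply makes that chain explicit. No further argument is needed.
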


Let us now consider the case of compatible systems of lisse sheaves on a scheme of finite type over $\mathbb{F}_q$. Refer to \cite[\S1.1 - \S1.3]{deligne80} for the basic definitions and results in this setting.

Let $X$ be a scheme of finite type, and geometrically connected over $\mathbb{F}_q$. Let $\overline{\eta}$ be a geometric point of $X$, and let $\pi_1^{\mathrm{\acute{e}t}} (X, \overline{\eta})$ be the arithmetic fundamental group of $X$. Let $\ell \neq p$ be a prime. The functor which assigns to each lisse $\alg{\Q}_\ell$-sheaf $\mathcal{L}$ on $X$ its fibre $\mathcal{L}_{\overline{\eta}}$ over $\overline{\eta}$ induces an equivalence between the category of lisse $\alg{\Q}_\ell$-sheaves on $X$ and the category of finite dimensional continuous representations of $\pi_1^{\mathrm{\acute{e}t}} (X, \overline{\eta})$ with coefficients in $\alg{\Q}_\ell$. Via this equivalence, standard notions about representations (e.g semisimplicity, irreducibility, or Lie-irreducibility) can be translated to the context of lisse sheaves.

Let $E$ be a number field, and let $\Lambda$ be a set of finite places of $E$ of residual Dirichlet density $1$. We recall the following definition.

\begin{definition}
A \emph{compatible system} of lisse $E_\lambda$-sheaves on $X$ is a family $\mathcal{R} = \set{\mathcal{L}_\lambda}_{\lambda \in \Lambda}$ of lisse $E_\lambda$-sheaves $\mathcal{L}_\lambda$ on $X$ such that, for any closed point $x \in |X|$ of $X$ and any $\lambda \in \Lambda$ not dividing $p$, the polynomial $\det (1-\Frob_x t, \mathcal{L}_\lambda)$ has coefficients in $E$ and is independent of $\lambda$.
\end{definition}

Any compatible system of lisse $E_\lambda$-sheaves $\mathcal{R} = \set{\mathcal{L}_\lambda}_{\lambda \in \Lambda}$ on $X$ defines a compatible system $\set{\rho_\lambda}_{\lambda \in \Lambda_{\neq p}}$ of representations of $\pi_1^{\mathrm{\acute{e}t}} (X, \overline{\eta})$ with coefficients in $E$, in the sense of Definition \ref{compatible_systems_definition}, where $\rho_\lambda$ is the representation $\mathcal{L}_{\lambda, \overline{\eta}}$, and $\mathcal{X} = |X| \times \Lambda_{\neq p}$. The algebraic monodromy group $G_\lambda$ of $\rho_\lambda$ is called the \emph{arithmetic monodromy group} of $\mathcal{L}_\lambda$. We denote by $\mathfrak{g}_\lambda$ the Lie algebra of the connected component $G_\lambda^\circ$ of the identity of $G_\lambda$. Also, when $X$ is geometrically connected over $\mathbb{F}_q$, for simplicity of notation we omit the base point $\overline{\eta}$ and write just $\pi_1^{\mathrm{\acute{e}t}} (X)$ for the \'etale fundamental group of $X$.

From Corollary \ref{independence_positive_characteristic} we deduce the following $\lambda$-independence result.

\begin{corollary} \label{independence_lisse_sheaves}
Let $X$ be a normal geometrically connected irreducible variety over $\mathbb{F}_q$, and let $\mathcal{R} = \set{\mathcal{L}_\lambda}_{\lambda \in \Lambda}$ be a compatible system of semisimple lisse $E_\lambda$-sheaves on $X$. Then, the following hold.
\begin{enumerate}
\item[$(1)$] There exist a finite extension $L$ of $E$, a subset $\Lambda'$ of $\Lambda$ of residual Dirichlet density~$1$, and a reductive Lie subalgebra $\mathfrak{g}$ of $\gl_{n, L}$ such that for each~$\lambda \in \Lambda'$~there exists a finite place $\mathfrak{l}$ of $L$ lying above $\lambda$ such that $\mathfrak{g}_{\lambda} \otimes_{E_{\lambda}} L_{\mathfrak{l}}$ is conjugate to $\mathfrak{g} \otimes_{L} L_{\mathfrak{l}}$ over $L_{\mathfrak{l}}$.
\item[$(2)$] If $\mathcal{R}$ is tame\footnote{We say that a compatible system $\mathcal{R} = \set{\mathcal{L}_\lambda}_{\lambda \in \Lambda}$ of lisse $E_\lambda$-sheaves on $X$ is \emph{tame} if for any discrete rank $1$ valuation $v$ of the field $\mathbb{F}_q (X)$ of rational functions on $X$ and for any $\lambda \in \Lambda_{\neq p}$ the fixed field inside $\sep{\mathbb{F}_q (X)}$ of the kernel of the representation of $\Gamma_{\mathbb{F}_q (X)}$ defined by $\mathcal{L}_\lambda$ is a tame extension of of $\mathbb{F}_q (X)$ at $v$.}, then there exist a finite extension $L$ of $E$, a subset $\Lambda'$ of $\Lambda$ of residual Dirichlet density~$1$, and a connected reductive algebraic subgroup $G$ of $\GL_{n, L}$ defined over $L$ such that for each~$\lambda \in \Lambda'$~there exists a finite place $\mathfrak{l}$ of $L$ lying above $\lambda$ such that $G_{\lambda}^\circ \times_{E_{\lambda}} L_{\mathfrak{l}}$ is conjugate to $G \times_{L} L_{\mathfrak{l}}$ over $L_{\mathfrak{l}}$.
\end{enumerate}
\end{corollary}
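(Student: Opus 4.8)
The plan is to reduce to the one-dimensional case and invoke Corollary \ref{independence_positive_characteristic}, using that for a smooth curve $C$, geometrically connected over a finite extension of $\mathbb{F}_q$, a continuous representation of $\pi_1^{\mathrm{\acute{e}t}}(C)$ may be regarded as a representation of the absolute Galois group of the function field of $C$, with unchanged monodromy group. As a preliminary step I would replace $X$ by a dense open smooth subscheme $U$, which is harmless since normality of $X$ gives a surjection $\pi_1^{\mathrm{\acute{e}t}}(U) \sra \pi_1^{\mathrm{\acute{e}t}}(X)$ and hence the same arithmetic monodromy groups $G_\lambda$. The heart of the argument is then a curve lemma producing a morphism $\iota \colon C \ra U$ with $C$ a smooth curve, geometrically connected over a finite extension $\mathbb{F}_{q'}$ of $\mathbb{F}_q$. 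For any such $\iota$ the pullbacks $\set{\iota^* \mathcal{L}_\lambda}$ form a compatible system of lisse $E_\lambda$-sheaves on $C$, because $\iota$ sends a closed point $x$ of $C$ to a closed point of $X$ with $\Frob_x$ mapping to a power of $\Frob_{\iota(x)}$, so the characteristic polynomials of Frobenii still lie in $E$ and are independent of $\lambda$; and each $\iota^* \mathcal{L}_\lambda$ is semisimple, because its monodromy group, being a finite extension of the reductive group $G_\lambda^\circ$ (\cite[Lemma 6.9]{LP92}), is reductive. What differs between $(1)$ and $(2)$ is how much of $G_\lambda$ the curve $C$ recovers.

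For part $(2)$, when $\mathcal{R}$ is tame each $\mathcal{L}_\lambda$ factors through the tame fundamental group, and one uses that the tame fundamental group of a normal variety over a finite field is generated by images of fundamental groups of curves (Wiesend, Kerz--Schmidt; see also \cite{BGP19}); with some care this yields a \emph{single} $\iota \colon C \ra U$ such that $\rho_\lambda(\iota_* \pi_1^{\mathrm{\acute{e}t}}(C))$ is Zariski dense in $G_\lambda$ for all $\lambda$ outside a finite subset of $\Lambda$, which we discard. Then the arithmetic monodromy group of $\iota^* \mathcal{L}_\lambda$ equals $G_\lambda$, and Corollary \ref{independence_positive_characteristic} applied to the compatible system $\set{\iota^* \mathcal{L}_\lambda}$ of semisimple representations of $\Gamma_{\mathbb{F}_{q'}(C)}$ produces the finite extension $L/E$, the density-one subset $\Lambda'$ and the connected reductive subgroup $G \subseteq \GL_{n,L}$ over $L$ with $G_\lambda^\circ \times_{E_\lambda} L_{\mathfrak{l}}$ conjugate to $G \times_L L_{\mathfrak{l}}$ for a place $\mathfrak{l} \mid \lambda$, which is exactly $(2)$.

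For part $(1)$ one only needs to control the Lie algebra, and here it suffices to reach the curve at the level of geometric monodromy: the classical Bertini theorem for the fundamental group over the algebraically closed base (cf. \cite[\S 1.3]{deligne80}) gives a $C$ with $\pi_1^{\mathrm{\acute{e}t}}(C_{\alg{\mathbb{F}}_q}) \sra \pi_1^{\mathrm{\acute{e}t}}(U_{\alg{\mathbb{F}}_q})$, so the geometric monodromy group of $\iota^* \mathcal{L}_\lambda$ coincides with that of $\mathcal{L}_\lambda$; the one-dimensional case then yields the $\lambda$-independence, up to conjugacy over a common field, of the geometric monodromy group, hence of the derived subalgebra $[\mathfrak{g}_\lambda, \mathfrak{g}_\lambda]$ of the reductive Lie algebra $\mathfrak{g}_\lambda$ (which equals $[\mathfrak{g}_\lambda^{\mathrm{geom}}, \mathfrak{g}_\lambda^{\mathrm{geom}}]$ since $\mathfrak{g}_\lambda / \mathfrak{g}_\lambda^{\mathrm{geom}}$ is abelian), while the centre of $\mathfrak{g}_\lambda$ is pinned down $\lambda$-independently by the independence of the formal character of $G_\lambda^\circ \ra \GL_{n, \alg{E}_\lambda}$ (Proposition \ref{independence_formal_character}, Corollary \ref{independence_formal_character_split}). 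Reassembling the central and semisimple parts over the common field produces the required reductive Lie subalgebra $\mathfrak{g} \subseteq \gl_{n,L}$ with $\mathfrak{g}_\lambda \otimes_{E_\lambda} L_{\mathfrak{l}}$ conjugate to $\mathfrak{g} \otimes_L L_{\mathfrak{l}}$. I expect the main obstacle to be the curve lemma, and in particular the version needed for part $(2)$: producing one curve whose image under every $\rho_\lambda$ is Zariski dense in the \emph{full} arithmetic monodromy group — not merely its geometric part, which a curve over $\mathbb{F}_q$ may fail to see because of wild ramification — uniformly in $\lambda$; this is precisely where tameness enters. The reassembly in part $(1)$ is routine but must be carried out keeping track of fields of definition.
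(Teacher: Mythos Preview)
Your treatment of part $(2)$ is essentially the paper's: both rely on the curve-finding result of B\"ockle--Gajda--Petersen \cite{BGP19} in the tame case to produce a single curve $C \to X$ whose image under every $\rho_\lambda$ recovers the full arithmetic monodromy group, and then both apply Corollary~\ref{independence_positive_characteristic}.

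For part $(1)$, however, the paper does not go through Bertini and a separate derived/central reassembly. It uses \cite[Corollary~3.33]{BGP19} uniformly: even without tameness, that result furnishes a finite \'etale cover $\varphi\colon X'\to X$ together with a curve $\iota\colon C\hookrightarrow X'$ such that $\rho_\lambda(\varphi_*\pi_1^{\mathrm{\acute{e}t}}(X'))=\rho_\lambda(\iota_*\pi_1^{\mathrm{\acute{e}t}}(C))$ for \emph{all} $\lambda\in\Lambda_{\neq p}$. Since $\varphi_*\pi_1^{\mathrm{\acute{e}t}}(X')$ is open in $\pi_1^{\mathrm{\acute{e}t}}(X)$, the monodromy group $G'_\lambda$ of $\rho_\lambda\circ\varphi_*$ satisfies $\mathrm{Lie}(G_\lambda'^{,\circ})=\mathfrak{g}_\lambda$; applying Corollary~\ref{independence_positive_characteristic} to the curve $C$ then gives $(1)$ directly, with no need to split off the centre.

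Your Bertini route for $(1)$ can in fact be made to work, but not via the detour you propose. The ``reassembly'' step is not routine: knowing $[\mathfrak{g}_\lambda,\mathfrak{g}_\lambda]\subset\gl_n$ up to conjugacy and the formal character (i.e.\ the Cartan $\mathrm{Lie}(T_\lambda)\subset\gl_n$ up to conjugacy) does not obviously pin down $\mathfrak{g}_\lambda$ up to conjugacy, because the two conjugacies need not be compatible. What actually saves you is something you already use implicitly when you claim that $\iota^*\mathcal{L}_\lambda$ is semisimple ``because its monodromy group is a finite extension of $G_\lambda^\circ$'': once $\pi_1^{\mathrm{\acute{e}t}}(C_{\alg{\mathbb{F}}_q})\twoheadrightarrow\pi_1^{\mathrm{\acute{e}t}}(U_{\alg{\mathbb{F}}_q})$, the image $\iota_*\pi_1^{\mathrm{\acute{e}t}}(C)$ contains the full geometric $\pi_1^{\mathrm{\acute{e}t}}(U)$ and surjects onto the open subgroup $\Gal(\alg{\mathbb{F}}_q/\mathbb{F}_{q'})\subset\widehat{\Z}$, hence is itself open in $\pi_1^{\mathrm{\acute{e}t}}(U)$. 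So $G_\lambda^{C,\circ}=G_\lambda^\circ$ and $\mathfrak{g}_\lambda^C=\mathfrak{g}_\lambda$ outright, and Corollary~\ref{independence_positive_characteristic} applied to $C$ gives $(1)$ with no reassembly at all. In short: your plan for $(1)$ has a gap as written (the reassembly), but the gap is an artefact of an unnecessary detour; the paper avoids the issue entirely by invoking \cite{BGP19} in both the tame and non-tame cases.
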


\begin{proof}
Let $\set{\rho_\lambda}_{\lambda \in \Lambda_{\neq p}}$ be the compatible system of representations of $\pi_1^{\mathrm{\acute{e}t}} (X)$ corresponding to $\mathcal{R}$. By \cite[Corollary 3.33]{BGP19} there exist a finite \'etale cover $\varphi : X' \ra X$, and a curve $C$ embedding into $X'$ via some $\imath$ such that $\rho_\lambda (\varphi_* \pi_1^{\mathrm{\acute{e}t}} (X')) = \rho_\lambda ( \imath_* \pi_1^{\mathrm{\acute{e}t}} (C))$, for all $\lambda \in \Lambda_{\neq p}$. Furthermore, if $\mathcal{R}$ is tame, we can take $\varphi = \mathrm{Id}_X$.

For any $\lambda \in \Lambda_{\neq p}$, let $\rho_\lambda' = \varphi_* \circ \rho_\lambda$, and let $G_\lambda'$ be its algebraic monodromy group. Since the morphism $\varphi$ is finite \'etale, then $\varphi_* \pi_1^{\mathrm{\acute{e}t}} (X')$ is an open subgroup of $\pi_1^{\mathrm{\acute{e}t}} (X)$, and so the Lie algebra of $G_\lambda'^{, \circ}$ coincides with the Lie algebra $\mathfrak{g}_\lambda$ of $G_\lambda^\circ$. Then, $\set{\rho'_\lambda}_{\lambda \in \Lambda_{\neq p}}$ is a compatible system of semisimple representations of $\pi_1^{\mathrm{\acute{e}t}} (X')$. Note that, obviously, if $\mathcal{R}$ is tame, and we take $\varphi = \mathrm{Id}_X$, then $G_\lambda' = G_\lambda$.

Up to birational equivalence, we can assume that $C$ is smooth and projective. Let $F$ be the field of rational functions on $C$, and let $\pi_C : \Gamma_F \ra \pi_1^{\mathrm{\acute{e}t}} (X')$ be the composition of the natural surjection of $\Gamma_F$ onto $\pi_1^{\mathrm{\acute{e}t}} (C)$ with $\imath_*$.

For any $\lambda \in \Lambda_{\neq p}$, let $\rho_{C, \lambda} = \rho_\lambda' \circ \pi_C$. Note that $\rho_{C, \lambda} (\Gamma_F) = \rho_\lambda' (\pi_1^{\mathrm{\acute{e}t}} (X'))$, so that the algebraic monodromy groups of $\rho_{C, \lambda}$ and $\rho_\lambda'$ coincide. The family $\set{\rho_{C, \lambda}}_{\lambda \in \Lambda_{\neq p}}$ is a compatible system of semisimple representations of $\Gamma_F$, and then, by Corollary \ref{independence_positive_characteristic} there exist a finite extension $L$ of $E$, a subset $\Lambda'$ of $\Lambda$ of residual Dirichlet density~$1$, and a connected reductive algebraic subgroup $G$ of $\GL_{n, L}$ defined over $L$ such that for each~$\lambda \in \Lambda'$~there exists a finite place $\mathfrak{l}$ of $L$ lying above $\lambda$ such that $G_{\lambda}'^{, \circ} \times_{E_{\lambda}} L_{\mathfrak{l}}$ is conjugate to $G \times_{L} L_{\mathfrak{l}}$ over $L_{\mathfrak{l}}$. If we let $\mathfrak{g}$ be the Lie algebra of $G$, we then get $(1)$. When $\mathcal{R}$ is tame, we take $\varphi = \mathrm{Id}_X$ and we get $(2)$.
\end{proof}

\begin{remark}
A crucial point in the proof of this result is the reduction to the case of curves, which relies on the work of B\"ockle, Gajda, and Petersen in \cite[\S 3.3]{BGP19}. On the other hand, if we assume the variety $X$ to be smooth and projective, the reduction to the case of curves can be achieved (without passing to a finite \'etale cover) by applying iteratively Bertini theorem over finite fields \cite[~Theorem~1.1]{poonen04} and Lefschetz theorem \cite[Exp. XII, Corollaire 3.5]{SGA2}. In this case, we get $\lambda$-independence of the neutral components of the algebraic monodromy groups (and not just $\lambda$-independence of the Lie algebras) even when the compatible system $\mathcal{R}$ is not necessarily tame. A result of this form has been obtained with a different approach by D'Addezio, see \cite[~Theorem~4.3.2]{daddezio17}, without assuming projectivity (but still assuming smoothness). Nevertheless, note that when $X$ is a (not necessarily smooth) curve no projectivity assumption is required, as one can always find a smooth projective curve which is birational to $X$, and apply the above argument to it. In this way, we recover the main independence result of Chin, see \cite[~Theorem~1.4]{chin04}, at a density $1$ set of places, by an alternative method.
\end{remark}

\bibliographystyle{alpha2}
\bibliography{Bibliography.bib}

\end{document}